\documentclass[10pt]{amsart}
\usepackage{amssymb}

\usepackage{lmodern}

\usepackage{amsmath,amssymb,amscd,graphicx,epsfig}
\usepackage{lineno}
\usepackage{hyperref}
\usepackage{graphicx}
\usepackage{caption}
\everymath{\displaystyle}
\usepackage{enumerate}

\newtheorem{thm}{Theorem}[section]
\newtheorem{lemma}[thm]{Lemma}
\newtheorem{cor}[thm]{Corollary}
\newtheorem{prop}[thm]{Proposition}

\theoremstyle{remark}

\newtheorem{rem}{Remark}

\numberwithin{equation}{section}

\def\NN{\mathbb N}
\def\ZZ{\mathbb Z}

\def\RR{\mathbb{R}}

\def\B{\mathcal{B}}
\def\C{\mathcal{C}}

\def\E{\mathcal{E}}

\def\I{\mathcal{I}}
\def\J{\mathcal{J}}
\def\L{\mathcal{L}}

\def\U{\mathcal{U}}

\def\M{\mathcal{M}} 
\def\N{\mathcal{N}} 
\def\P{\mathcal{P}}

\def\S{\mathcal{S}}
\def\T{\mathcal{T}}

\usepackage{graphicx}

\newcommand{\norm}[1]{\Vert#1\Vert}

\newcommand{\abs}[1]{\lvert#1\rvert}

\newcommand{\diam}[1]{{\rm diam}({#1})}

\usepackage{multicol}

\begin{document}

\title{Linearly repetitive Delone sets are rectifiable}

\author{Jos\'e Aliste-Prieto}
 \address{ {\it J. Aliste-Prieto: }
Centro de Modelamiento Matem\'atico, Universidad de Chile, Blanco Encalada
2120 7to. piso,
Santiago, Chile.}
\email{jaliste@dim.uchile.cl}

\author{Daniel Coronel}
\address{ {\it D. Coronel: }Facultad de Matem\'aticas, Pontificia Universidad Cat\'olica de Chile, Campus San Joaqu\'in, Avenida Vicu\~{n}a Mackenna 4860, 
Santiago, Chile.} \email{acoronel@mat.puc.cl}

\author{Jean-Marc Gambaudo}
\address{ {\it J.-M. Gambaudo: }Laboratoire J.-A. Dieudonn\'e U.M.R. no 6621 du C.N.R.S.
Universit\'e de Nice - Sophia Antipolis
Parc Valrose
06108 Nice Cedex 02
France.} \email{gambaudo@unice.fr}




\date{\today}

\begin{abstract}
We show that  every linearly repetitive 
Delone set in the Euclidean $d$-space $\RR^d$, with $d\geq2$, 
is equivalent, up to a bi-Lipschitz homeomorphism, to the integer lattice
$\ZZ^d$. In the particular case when the Delone set $X$ in $\RR^d$  comes from
a primitive substitution tiling of $\RR^d$, we give a condition on the
eigenvalues of the  substitution matrix which ensures 
the existence of a homeomorphism with bounded displacement from $X$ to 
the lattice lattice $\beta\ZZ^d$  for some positive $\beta$. This condition
includes primitive Pisot substitution tilings but also concerns a much broader
set of substitution tilings. 
\end{abstract}

\maketitle

\section{Introduction}
Let $(Z,\delta)$ be a metric space. A subset $X$ of $Z$ is called a  \emph{Delone set} or \emph{separated net} if  there exist $r,R>0$ such that each ball
of radius $R$ in $Z$ contains at least one point of $X$ and each ball of radius
$r$ in $Z$ contains at most one point of $X$. 
Let $X_1$ and $X_2$ be two Delone sets in $Z$. We say that they are
\textit{bi-Lipschitz equivalent}
if there exists a homeomorphism $\phi:X_1\rightarrow X_2$ and a constant $K>0$
such that
\[\frac{1}{K}\delta(x,x')\le \delta( \phi(x),\phi(x')) \le K\delta(x,x')\]holds
for all $x$ and $x'$ in $X_1$. The map $\phi$ is then called a
\textit{bi-Lipschitz homeomorphism} between $X_1$ and $X_2$.
We say that a homeomorphism $\phi:X_1\rightarrow X_2$ is a \textit{bounded
displacement} if 
\[
 \sup_{x\in X_1} \delta(\phi(x),x) < \infty.
\]
Clearly a bounded displacement between two Delone sets 
is a bi-Lipschitz homeomorphism.

In the case when the ambient metric space $(Z,\delta)$ is  the $d$-dimensional
Euclidean space $\RR^d$, $d\ge 2$, endowed with the Euclidean distance, the
problem to know whether 
 two Delone sets are bi-Lipschitz equivalent was raised by Gromov in
\cite{Gromov}, 
and  boiled down
in Toledo's review \cite{Toledo} to the following question for the 2-dimensional Euclidean space: {\it Is every separated net in $\RR^2$ bi-Lipschitz equivalent to $\ZZ^2$?}
Counterexamples to this question were given  
independently  by Burago and Kleiner \cite{BK} and McMullen \cite{M}. McMullen also showed that when relaxing the bi-Lipschitz
condition to a H\"{o}lder one, all separated nets in $\RR^d$ are
equivalent. Later, Burago and Kleiner \cite{BK1}
gave a sufficient condition for a separated net to be bi-Lipschitz equivalent to $\ZZ^2$ 
and asked the following question:
{\it If one forms a separated net in the plane by placing a point in the
center of each tile of a Penrose tiling, is the resulting net bi-Lipschitz
equivalent to $\ZZ^2$?}
They studied the more general question of knowing whether a separated net arising 
from a cut-and-project 
tiling is bi-Lipschitz equivalent to $\ZZ^2$ (recall that the Penrose tiling is also a
cut-and project tiling \cite{dB}) and solved it in some cases that do not include the case of Penrose tilings, thus leaving the former question open.

More recently, Solomon \cite{Solomon} gave a positive answer in the case of Penrose tilings by using  the fact that they can be constructed using substitutions (see for instance \cite{GS}). 
In fact, Solomon proved that each separated net arising from 
a primitive substitution tiling in $\RR^2$ is bi-Lipschitz to $\ZZ^2$. 
Moreover, as an application of the work  of Laczkovich \cite{Lacz}, he showed that for every  substitution tiling of $\RR^d$  of  
Pisot type there is a bounded displacement between 
its associated separated net and $\beta\ZZ^d$ for some $\beta>0$ (see Section \ref{sec:defs} for more details).

During the same period and surprisingly rather independently, Delone sets in $\RR^d$, have been used in mathematical physics as models of solid materials.  In particular, after the discovery of quasicrystals at the beginning of the 
80's  \cite{shechtman:1984}, a strong impulse has been devoted to model these quasi-periodic materials by appropriate Delone sets, introducing in this way  the notion of ``repetitive'' Delone sets.

Later, Lagarias and Pleasants focused on  ``linearly repetitive'' Delone sets  \cite{LP} as a subclass of repetitive Delone sets that models all known examples of  quasicrystals. This class  includes all  Delone sets  arising from  self-similar  tilings (it contains in particular  the Penrose tiling drawn with triangles and the Penrose tiling drawn with ``thick'' and
``thin'' rhombi \cite{GS})  but is actually much broader (see \cite{S}, \cite{PS} and
\cite{S1}).

In this paper,  we make a connection between these two fields of research by using the second point of view (Delone sets and quasicrystals) to improve some known results concerning separated nets which are  bi-Lipschitz equivalent to  $\ZZ^d$ or obtained from  $\ZZ^d$  by a bounded displacement.

 On one hand, we prove that for any $d\geq 2$, every linearly repetitive Delone set in $\RR^d$ is bi-Lipschitz equivalent to $\ZZ^d$. On the other hand,  we show that  Delone sets  
 arising from a  class of substitution tilings of $\RR^d$, which is larger than the class of Pisot
 type  tilings,  are obtained from $\ZZ^d$ by a bounded displacement.

From now on, we will prefer the denomination ``Delone sets'', more widely used in the literature 
when the ambient metric space is the $d$-dimensional Euclidean space, than ``separated nets''. 
 
\section{Definitions and results}
\label{sec:defs}
\subsection{Repetitive Delone sets}

Let $d\geq 2$ and $X$ be a Delone set in $\RR^d$. We denote by $B(x,r)$ the closed ball around $x$ of radius $r$ in $\RR^d$. A set of the form $X \cap B(x,r)$ with $x\in X$ is called a \textit{patch (with size $r$)} 
of $X$ centered at $x$.
A Delone set $X$ is called \textit{repetitive} if for each $r>0$, there exists $M>0$
such that for each  point $z$ in $\RR^d$,  and for every patch with size $r$, $X \cap B(x,r)$, 
there exists $y$ in $X \cap B(z, M)$, such that:
\[X\cap B(y, r) = (X\cap B(x,r)) + y-x.\]
The smallest such $M$ is denoted by
$M_X(r)$ and it is called the \textit{repetitivity function} of $X$ (see \cite{LP}). 
If there exists $L>0$ such that $M_X(r)\le Lr$, then  $X$ is called \textit{linearly
repetitive}. 

Our first result is the following: 
\begin{thm}\label{thm:main}
 Every linearly repetitive Delone set in $\RR^d$ is bi-Lipschitz 
equivalent to
$\ZZ^d.$
\end{thm}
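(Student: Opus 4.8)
\smallskip

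The plan is to reduce the assertion to a single metric construction and then to feed linear repetitivity into it as a quantitative equidistribution estimate. Since a homeomorphism between the discrete sets $X$ and $\ZZ^d$ is simply a bijection, and since (as already noted in the introduction) any bounded displacement between Delone sets is automatically bi-Lipschitz, it suffices to produce a bi-Lipschitz homeomorphism $F\colon\RR^d\to\RR^d$ such that the image $F(X)$ lies at bounded displacement from the lattice $\beta\ZZ^d$, where $\beta>0$ is fixed by the asymptotic density of $X$ (the lattice $\beta\ZZ^d$ being itself bi-Lipschitz equivalent to $\ZZ^d$ by rescaling). In other words, the whole problem is to build a global bi-Lipschitz homeomorphism of $\RR^d$ that \emph{flattens the density} of $X$, turning $X$ into a boundedly perturbed lattice. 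This is exactly the viewpoint of Burago--Kleiner and McMullen, who showed both that such an $F$ need not exist for a general separated net and that its existence is governed by how evenly $X$ fills cubes at every scale.

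\smallskip

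The key input must therefore be a uniform control, at all scales, of the point-count discrepancy: for a cube $Q$ of side $\ell$ one compares $\#(X\cap Q)$ with $\rho\,|Q|$, where $\rho$ is the density, and what matters is that the \emph{relative} discrepancy $\big|\#(X\cap Q)/|Q|-\rho\big|$ be small, uniformly in the location of $Q$, and controlled as $\ell$ grows. Here is where I would use linear repetitivity. Beyond mere repetitivity, the linear bound $M_X(r)\le Lr$ forces the empirical patch frequencies of $X$ to converge to their limits \emph{uniformly and with a scale rate}: this is precisely the content of unique ergodicity with uniform rate for linearly repetitive sets established by Lagarias--Pleasants, in the sharp quantitative form provided by the Damanik--Lenz uniform subadditive ergodic theorem. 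I would repackage this as a multiplicative bound $\theta(\ell)\ge 1$, with $\theta(\ell)\to 1$, measuring how much the empirical density over a cube of side $\ell$ can differ from the empirical density over any of its dyadic subcubes, valid uniformly for all cubes beyond a fixed base scale $\ell_0$.

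\smallskip

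With this estimate in hand, I would construct $F$ by a recursive dyadic subdivision: starting from a large cube on which $X$ already has near-average density, subdivide dyadically and, at each step, deform the map so that each subcube is sent to a cell carrying the correct proportion of points of $X$, the deformation at scale $\ell$ being bi-Lipschitz with constant comparable to $\theta(\ell)$. Passing to the limit over all scales produces $F$, and $F(X)$ is then a bounded-displacement copy of $\beta\ZZ^d$. The main obstacle is to guarantee that this construction actually terminates in a \emph{bi-Lipschitz} map, i.e.\ that the scalewise distortions $\theta(\ell)$ accumulate to a finite product rather than blowing up; this is the entire force of the hypothesis, since the Burago--Kleiner and McMullen counterexamples are separated nets of uniform density for which exactly this accumulation diverges, and it is the linearity of $M_X$ (not just $M_X(r)<\infty$) that must be shown to make the accumulation convergent. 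A secondary but necessary technical point is the exact count-matching at each scale: redistributing points so that the prescribed proportions are met on the nose requires a transport/assignment argument of Hall type, whose displacement cost has itself to be absorbed into the bi-Lipschitz bound --- this is the discrete analogue of Laczkovich's combinatorial tiling lemmas.
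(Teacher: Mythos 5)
Your proposal follows essentially the same route as the paper: reduce to the Burago--Kleiner rectification criterion (which the paper extends to all $d\ge 2$ via a dyadic Rivi\`ere--Ye construction followed by the Hall marriage argument), and verify its hypothesis by feeding in the Lagarias--Pleasants quantitative equidistribution estimate for linearly repetitive Delone sets. The one point you should make explicit is that $\theta(\ell)\to 1$ alone does not suffice --- the dyadic product $\prod_m \theta(2^m)$ converges only because Lagarias--Pleasants give the power-saving bound $\theta(\ell)\le 1+M'\ell^{-\delta}$ with $\delta>0$, which makes $\sum_m\bigl(\theta(2^m)-1\bigr)$ geometrically summable.
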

\begin{rem}Of course Theorem \ref{thm:main} is trivial when the dimension $d=1$
since, in this case, every Delone set (with no extra assumptions) is
bi-Lipschitz equivalent to $\ZZ$.
\end{rem}

\subsection{Substitution tilings}
Our second result concerns tilings arising from primitive substitutions. For more details about substitutions and tilings, 
see for instance \cite{S}. Let $d\geq2$ and $\Lambda$ be a closed subset of the Euclidean space $\RR^d$.  
A \emph{tiling} of  $\Lambda$ is a at most countable collection $\T =(t_j)_{j\in J}$ 
of closed subsets of $\Lambda$ that cover $\Lambda$ and have pairwise
disjoint interiors. 
The sets $t_j$ are called \emph{tiles} and, in this paper, all tiles 
are supposed to be homeomorphic to the unit closed ball in $\RR^d$. 
Tiles may also be \emph{colored}, which means that formally
they carry a label or color with them. There are several  notions of equivalence between tiles which  depend on the tilings under consideration. Let $\E$ be a group of isometries of $\RR^d$ containing all translations: two tiles $p$ and $q$ are \emph{$\E$-equivalent} (or in short $q$ is a $\E$-copy of $p$) if $q$ is the image of $p$ by an isometry in $\E$. If furthermore $p$ and $q$ are  colored, 
then they must have the same color.

Let $\P =\{p_1, \dots, p_k\}$ be a finite collection of tiles. 
A tiling $\T$ of  $\Lambda \subset \RR^d$ is \emph{$\E$-generated} by $\P$ 
if every tile in $\T$ is a $\E$-copy  of  some tile in $\P$. The tiles in $\P$ are called \emph{prototiles}. 
The set of all tilings of $\Lambda$ that are $\E$-generated by $\P$ is denoted 
by $\Omega_{\E,\P}(\Lambda)$. When $\Lambda = \RR^d$, we write $\Omega_{\E,\P}$ 
instead of $\Omega_{\E,\P}(\RR^d)$.

Given $\lambda > 1$ and a subset $U\subset \RR^d$, let $\lambda U:=\{\lambda x
\mid x\in U\}$ be the dilation of $U$ by $\lambda$. Similary, if $\M$ is a collection of tiles, the dilation of $\M$ by $\lambda$ is the 
collection  $\lambda \M:=\{\lambda t\mid t\in T\}$. Clearly, if $\M$ is a tiling, then $\lambda \M$ is also a tiling. A \emph{substitution rule} (with dilation factor $\lambda>1$) is a collection $\S
= (\S_{p_i})_{i=1}^k$, 
where for each $i\in\{1,\ldots,k\}$, $\S_{p_i}$ is a tiling of $\lambda p_i$ which is $\E$-generated by 
$\P$. Thus, each $\S_{p_i}$ gives the rule of  how to decompose $\lambda p_i$ into prototiles, for each $i$. 
Every substitution induces a natural map $\I_\S$ on $\Omega_{\E,\P}$, which first dilates tiles by $\lambda$ and then 
replaces dilated tiles by a patch of prototiles according to the substitution rule, for more details see Section \ref{Nous}.

A tiling $\T$ in $\Omega_{\E,\P}$ is said to be \emph{admissible} for $\S$ if it belongs to
\[\Omega_{\S} := \bigcap_{k\geq 0}\I_{\S}^k(\Omega_{\E, \P}).\]

For any given substitution, we can associate an integer matrix, in which each element counts how many tiles of a given type appear in a tile of another type after dilation and substitution. Depending on the definition of tile-type, we may obtain different matrices. For our purposes here, we consider the following definition (see also \cite{Solomon}): Given a substitution rule $\S$, we say that two tiles $p_i$ and $p_j$ have the same \emph{type} (or are \emph{$\S$-equivalent}) if there exists an isometry $O$ of $\mathbb{R}^d$ such that $O(p_i) = p_j$ and $O(\S_{p_i}) = \S_{p_j}$. Let $\mathcal{Q}=\{q_1,\ldots,q_n\}$ be the set of tile-types of all prototiles. The \emph{substitution matrix}
is then defined as the $n\times n$ matrix $M_{\S}=(m_{i,j})_{i,j}$, where each $m_{i,j}$ is the number of tiles of type $q_j$ that belong to $\S_{p}$ where $p$ is any prototile of type $q_i$. The definition of tile-type implies that $m_{i,j}$ does not depend on $p$ and thus $M_\S$ is well-defined. 

Finally, we recall some basic definitions of Perron-Frobenius theory needed to state our result. A matrix $M$ is \emph{primitive} if there exists $n>0$ such that all the  elements of $M^n$ are positive. By  Perron-Frobenius theorem, every primitive matrix $M$ 
has a largest positive real eigenvalue $\mu$, the \emph{Perron eigenvalue}, and moreover it has no other eigenvalue with the same modulus as $\mu$. It is easy to check that the Perron eigenvalue of $M_\S$ for a given substitution rule $\S$ is $\mu = \lambda^d$ when $M_\S$ is primitive.

Given a tiling tiling $\T$ in  $\Omega_{\E,\P}$, define $X_\T$ to be the set of barycenters of all the tiles in $\T$. It is clear
that $X_\T$ is a Delone set in $\RR^d$, and we call it  the  \textit{Delone set induced by 
$\T$}.
\begin{thm}\label{thm:2}
Suppose that  $\S$ is a substitution rule with dilation factor $\lambda$,
 that the substitution matrix $M_\S$ is primitive and that 
\[r(M) := \max \{|\eta|\mid \eta \neq \mu\text{ is an eigenvalue of } M \} < \lambda,\]
where $\mu$ is the Perron eigenvalue of $M_\S$. Then there exists $\beta>0$ such that  for every tiling
$\T$ in $\Omega_\S$, the Delone set $X_\T$
induced by $\T$ is obtained from $\beta\ZZ^d$ by a bounded displacement.
\end{thm}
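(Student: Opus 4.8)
The plan is to reduce the statement to a uniform discrepancy estimate for $X_\T$ and then invoke Laczkovich's criterion from \cite{Lacz} (the same tool used by Solomon in \cite{Solomon} for the Pisot case), which characterizes the Delone sets that are bounded displacements of a lattice of equal density in terms of a bound on their counting discrepancy over finite unions of cubes. Throughout, every constant will be forced to depend only on the substitution data $\S$ — its dilation factor $\lambda$, its matrix $M_\S$, and the finite geometry of the prototiles — which is precisely what will make the resulting bound, and hence a single $\beta$, uniform over all $\T\in\Omega_\S$.

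First I would fix the lattice. Since $M_\S$ is primitive, Perron--Frobenius gives $M_\S^{\,n}=\mu^{n}\,v\,w^{\top}+E_n$, with $v,w$ the right and left Perron eigenvectors normalized by $w^{\top}v=1$ and $\|E_n\|\le C\,n^{m}\,r(M)^{n}$ for an integer $m$ coming from the Jordan structure. Because a level-$n$ supertile built on a prototile of type $q_i$ is tiled by exactly $(M_\S^n)_{ij}$ tiles of type $q_j$, its volume is both $\mu^{n}\,\mathrm{vol}(p_i)$ and $\sum_j(M_\S^n)_{ij}\,\mathrm{vol}(p_j)$, so $v\propto(\mathrm{vol}(p_i))_i$. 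It follows that $X_\T$ has a well-defined density $\rho$, the same for every $\T\in\Omega_\S$ by primitivity, and I set $\beta:=\rho^{-1/d}$ so that $\beta\ZZ^d$ has density $\rho$.

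Next I would establish the core discrepancy estimate. Each tile carries exactly one barycenter, so a level-$n$ supertile $S$ of type $q_i$ contains exactly $(M_\S^n\mathbf{1})_i$ points of $X_\T$, where $\mathbf{1}=(1,\dots,1)^{\top}$, and the eigenvalue decomposition gives
\[
\big|(M_\S^n\mathbf{1})_i-\rho\,\mathrm{vol}(S)\big|=\big|(E_n\mathbf{1})_i\big|\le C'\,n^{m}\,r(M)^{n}.
\]
Since $S$ has linear size $\asymp\lambda^{n}$ and surface area $\asymp\lambda^{n(d-1)}$, this is where the hypothesis enters: $r(M)<\lambda$ forces $r(M)<\lambda^{d-1}$, so the per-supertile discrepancy is $o(\lambda^{n(d-1)})$, negligible against the surface area. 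I would then use the hierarchical supertile structure of $\T$ — which follows directly from $\Omega_\S=\bigcap_k\I_\S^k(\Omega_{\E,\P})$, since membership exhibits $\T$ as a union of level-$k$ supertiles for every $k$ — to decompose an arbitrary finite union $U$ of unit cubes (as in Laczkovich's formulation) into complete supertiles of descending level, filling the bulk with large ones and successive shells along $\partial U$ with smaller ones. A patch of $\partial U$ of surface area $A$ is met by $\asymp A/\lambda^{j(d-1)}$ level-$j$ supertiles, each of discrepancy $\le C'n^{m}r(M)^{j}$, so its total contribution is $\asymp A\sum_{j\ge0}\big(r(M)/\lambda^{d-1}\big)^{j}$, a geometric series that converges precisely because $r(M)<\lambda^{d-1}$ (the polynomial factor does not affect convergence). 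Summing over $\partial U$ yields
\[
\big|\#(X_\T\cap U)-\rho\,\mathrm{vol}(U)\big|\le K\,\sigma(U),
\]
with $\sigma(U)$ the number of $(d-1)$-dimensional boundary faces of $U$ and $K$ depending only on $\S$. Laczkovich's theorem then upgrades this bounded discrepancy to a bounded-displacement bijection $X_\T\to\beta\ZZ^d$.

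The hard part will be the third step, the passage from supertiles to the cube-unions of Laczkovich's criterion. Supertiles are neither cubes nor aligned with any fixed grid, so matching the hierarchical decomposition to $\partial U$ requires controlling (i) the tiles and supertiles that straddle $\partial U$, whose points are only partially counted and which must be absorbed into the $K\,\sigma(U)$ term; (ii) the accumulation over infinitely many scales, which is exactly what the geometric series tames; and (iii) the comparison between the tiling's intrinsic boundary and the cube-face count $\sigma(U)$, which rests on the bounded geometry of the finitely many prototiles (all homeomorphic to a ball). Keeping every constant dependent only on $\S$ throughout this multiscale bookkeeping — so that one $\beta$ and one displacement bound serve all $\T\in\Omega_\S$ — is the delicate point; by comparison the Perron--Frobenius asymptotics and the identification of $\rho$ are routine.
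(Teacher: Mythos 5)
Your overall architecture coincides with the paper's: fix $\beta$ from the Perron data, prove a discrepancy bound $\abs{\N(X_\T,U)-\alpha\mu_d(U)}\le K\mu_{d-1}(\partial U)$ over unions of cubes by combining a hierarchical supertile decomposition of $U$ with the Perron--Frobenius error estimate, and close with Laczkovich's criterion. But the step you yourself flag as ``the hard part'' is where the paper does its real work, and your quantitative claim there is wrong: a patch of $\partial U$ of $(d-1)$-area $A$ is \emph{not} in general met by $\asymp A/\lambda^{j(d-1)}$ level-$j$ supertiles. Take $d=3$ and $U$ a $\delta\times\delta\times L$ box: $\mu_{d-1}(\partial U)\asymp \delta L$, yet for $\lambda^{j}\gg\delta$ the boundary behaves like a curve of length $L$ and meets $\asymp L/\lambda^{j}$ level-$j$ supertiles, which dwarfs $\delta L/\lambda^{2j}$. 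The correct general bound (Lemma \ref{lem:bord}, imported from \cite{ACG}) is only \emph{linear} in the scale: for a compact arc-connected set $\C$, the number of level-$l$ supertiles meeting $\C$ is $\lesssim\lambda^{-l}$ times the number of level-$0$ tiles meeting $\C$, proved by a packing/path argument. Consequently the geometric series controlling the boundary contribution is $\sum_j(r(M)/\lambda)^{j}$, not $\sum_j\bigl(r(M)/\lambda^{d-1}\bigr)^{j}$, and it converges precisely because $r(M)<\lambda$. This is why the hypothesis of the theorem is $r(M)<\lambda$ rather than the weaker $r(M)<\lambda^{d-1}$ that your count would suggest; as written, your argument rests on a counting estimate that is false for $d\ge3$.

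Two further points you gloss over are needed to make the linear-in-scale lemma usable. First, that lemma requires $\C$ to be arc-connected, so one must first reduce to the case where $U$ and $\partial U$ are connected (filling in holes and correcting the counts afterwards); this is the paper's topological simplification, and it is not automatic for arbitrary unions of cubes. Second, the hierarchical decomposition must come with the bounds $\N(\T^l,U_l)\le\|M_\S\|_1\,\L(\T^{l+1},\partial U)$ and $\lambda^{m-l-1}\le (R_\T/r_\T)\L(\T^l,\partial U)$ capping the top level, and one needs $\delta$ chosen (uniformly over $\Omega_\S$, which is possible since $r_\T$, $R_\T$ are uniform there) so that the counting lemma's threshold hypotheses hold. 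None of this changes your plan in outline, but the exponent in the boundary count is the load-bearing estimate, and it must be the linear one.
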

\begin{rem}
A substitution rule $\S$ is of \emph{Pisot
type} if $r(M_\S) < 1$ (compare with the definition in \cite{Solomon}).  
\end{rem}
We finish this section by introducing some useful notations. For any $d\geq 1$, we denote by $\mu_d$ the Lebesgue measure in $\RR^d$. For every subset $U$ of $\RR^d$, we denote its boundary  by $\partial U$.  Furthermore, for every  subset $U$ of $\RR^d$  and every  Delone set  $X$ in $\RR^d$, we denote by  $\N(X,U)$ the number of points of $X$ inside $U$.  
\section{Proof of Theorem \ref{thm:main}}

In \cite{BK1}, Burago and Kleiner gave a sufficient condition for a Delone set in $\RR^2$ to be bi-Lipschitz equivalent to $\ZZ^2$. This condition concerns  the speed of  convergence to an asymptotic density of the number of points of $X$ inside larger and larger balls. As we will see, it turns out that an analog condition works in every dimension $d\geq 2$. First, we need some definitions. A  \textit{cube C with size $l(C) >0$} in $\RR^d$ is a subset of the form  
$C=x+[0,l(C)]^d$, where $x \in \RR^d$. Let  $X\subset \RR^d$ be a Delone set. Given $\rho>0$ 
and a cube $C\subset \RR^d$  with size $l(C)$, define $e_\rho(C)$ to be the
\emph{density deviation} 
\begin{equation}
e_\rho(C) =  \max \left(\frac{\rho \mu_d(C)}{\N(X,C)} , \frac{\N(X,C)}{\rho  \mu_d(C)}
\right).
\end{equation}
Next, for $k\in\NN$, define  $E_\rho(k)$ as the supremum
of the quantities $e_\rho(C)$, where $C$ ranges over all cubes with size $l(C)
=k$ and vertices at $\ZZ^d$. 
 The condition reads as follows. 
\begin{thm}\label{d>1} 
Let  $d \geq 2$ and   $X\subset \RR^d$ be a Delone set. Suppose that there is  
$\rho>0$ such that the product
\[\prod_{m=1}^{+\infty} E_\rho(2^m)\] converges.  Then $X$ is bi-Lipschitz equivalent to $\ZZ^d$. 
\end {thm}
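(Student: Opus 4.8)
The plan is to follow the strategy of Burago and Kleiner in \cite{BK1}, adapting their two-dimensional dyadic construction to every dimension $d\ge 2$. The goal is to build a bi-Lipschitz homeomorphism $\Phi\colon\RR^d\to\RR^d$ that \emph{equidistributes} $X$, in the sense that the image net $\Phi(X)$ has density exactly $\rho$ at all scales, and then to deduce the statement from the general principle that a net with controlled density at every scale is bi-Lipschitz equivalent to the lattice. Since a bounded displacement between Delone sets is bi-Lipschitz and a rescaling is bi-Lipschitz, it suffices to produce such a $\Phi$ together with a bounded displacement from $\Phi(X)$ to $\rho^{-1/d}\ZZ^d$; composing $\Phi|_X$ with this displacement and with the dilation identifying $\rho^{-1/d}\ZZ^d$ with $\ZZ^d$ yields the desired bi-Lipschitz homeomorphism $X\to\ZZ^d$.

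The map $\Phi$ is defined by an inductive procedure along the dyadic filtration. To every dyadic cube $C$ I assign a target box $\Phi(C)$ with $\mu_d(\Phi(C))=\N(X,C)/\rho$; the recursion passes from a cube $C$ of size $2^m$ to its $2^d$ dyadic children $C_1,\dots,C_{2^d}$ of size $2^{m-1}$ by subdividing the already-constructed box $\Phi(C)$ into $2^d$ sub-boxes $B_1,\dots,B_{2^d}$ with prescribed volumes $\mu_d(B_i)=\N(X,C_i)/\rho$, and mapping each child onto the corresponding sub-box compatibly on shared faces. Exhausting $\RR^d$ by an increasing sequence of cubes (after fixing a normalization so that the maps on successive cubes agree) and passing to the limit produces a homeomorphism $\Phi$ of $\RR^d$ for which $\mu_d(\Phi(C))=\N(X,C)/\rho$ on every dyadic cube.

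The technical heart, which I expect to be the main obstacle, is the \emph{box-splitting lemma} controlling the distortion introduced at each refinement step, together with the control of aspect ratios across scales. Given a box and target child-volumes that deviate from the uniform value $\mu_d(\Phi(C))/2^d$ by at most a factor $\kappa_m$, one constructs the splitting coordinate by coordinate: a sequence of $d$ one-dimensional piecewise-affine reparametrizations, each moving a splitting hyperplane so that the accumulated volume on each side is correct. The key point is to estimate $\kappa_m$ in terms of the density deviations: combining the definitions of $e_\rho$ and $E_\rho$ one gets $\kappa_m\le E_\rho(2^{m-1})E_\rho(2^m)$ up to harmless factors, so that the bi-Lipschitz constant of the $m$-th refinement is at most $1+O(\kappa_m-1)$. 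Crucially, because merely prescribing volumes could let sub-boxes become long and thin, at each step the target sub-boxes must be chosen as close to cubes as the volume constraint allows; one then checks that the aspect ratios drift by a factor at most $1+O(\kappa_m-1)$ per scale. The hypothesis that $\prod_m E_\rho(2^m)$ converges guarantees that $\prod_m\kappa_m<\infty$, hence that both the accumulated bi-Lipschitz constant and the accumulated aspect-ratio distortion stay uniformly bounded; this is exactly where the strength of the assumption is used, and it is also what rules out the Burago--Kleiner counterexamples.

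With $\Phi$ in hand, it remains to compare $\Phi(X)$ with $\rho^{-1/d}\ZZ^d$. By construction each image box $\Phi(C)$ of a unit dyadic cube has volume $\N(X,C)/\rho$ and contains exactly $\N(X,C)$ points of $\Phi(X)$, so $\Phi(X)$ has density exactly $\rho$ on a grid of regions uniformly comparable to unit cubes. A standard argument---matching points to lattice sites box by box via Hall's marriage theorem, the surplus or deficit between neighbouring boxes being uniformly bounded---then yields a bounded displacement from $\Phi(X)$ to $\rho^{-1/d}\ZZ^d$, completing the proof.
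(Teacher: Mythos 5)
Your overall strategy is the paper's (and Burago--Kleiner's): a multiscale dyadic construction of a bi-Lipschitz map that equidistributes the density of $X$, with the distortion introduced at scale $2^m$ controlled by the density deviations $E_\rho(2^{m-1})$, so that the convergent product bounds the accumulated bi-Lipschitz constant, followed by a Hall-marriage argument to get a bounded displacement onto a rescaled lattice. Your estimate $\kappa_m\le E_\rho(2^{m-1})E_\rho(2^m)$ is essentially the paper's bound $\tfrac12 E(2^{i-1})^{-2}\le\alpha^p_{\bar n,m,i,\bar k}\le\tfrac12 E(2^{i-1})^{2}$, and the final reduction via Hall's lemma is identical. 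Where you genuinely diverge is in the single-scale step. The paper reduces the theorem to the prescribed-Jacobian equation $\det(\nabla\Psi)=u$ for the step function $u$ counting points of $X$ per unit cube (Proposition \ref{BuKl}) and solves it with the Rivi\`ere--Ye lemma (Lemma \ref{RiYe}), whose elementary maps are the \emph{identity on the boundary} of each cell and merely redistribute the Jacobian between the two halves. Because every elementary map fixes its cell's boundary, the maps glue trivially across the hierarchy, no aspect ratios need to be tracked, and the global map is extracted from the uniformly bi-Lipschitz maps $\Psi_{\bar n,m}$ (extended by the identity) via Arzel\`a--Ascoli.

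Your alternative --- sending each dyadic cube $C$ to a target box of volume $\N(X,C)/\rho$ --- carries burdens that you assert rather than discharge, and they are exactly what the identity-on-the-boundary device eliminates. First, face compatibility: the recursions inside two adjacent children subdivide their shared face according to two different point counts, so the two restrictions to that face do not automatically agree pointwise; ``mapping each child onto the corresponding sub-box compatibly on shared faces'' is the hard part, not a footnote, and you give no mechanism for it. Second, consistency under exhaustion: the shape and position of $\Phi(C)$ depend on the whole chain of ancestors above $C$, so the maps built from successive exhausting cubes do not agree, and no normalization makes them agree; one must instead pass to a limit by compactness from a uniform bi-Lipschitz bound, which your write-up does not set up. Third, the splitting itself: you cannot prescribe all $2^d$ child volumes with a grid of $d$ hyperplanes ($d$ degrees of freedom against $2^d-1$ constraints), so the subdivision must be a coordinate-by-coordinate binary partition with the aspect-ratio drift controlled at each of the $d$ sub-steps --- this is precisely the structure of the maps $\Phi^p_{\bar n,m,i,\bar k}$ in the paper, but there each sub-step is realized inside a fixed cell rather than by moving cells. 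Each of these points is repairable, but as written the construction of $\Phi$ is incomplete, and the cleanest repair is the one the paper adopts: keep every cell fixed and deform only the Jacobian inside it.
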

\begin{rem} When $d=2$, Theorem \ref{d>1}  corresponds to the main theorem of \cite{BK1}. 
In the proof of \cite{BK1}, the authors solve  a  prescribed volume form equation. 
To prove this result for $d>2$  we use a very useful construction by Rivi\`ere and Ye \cite{RY}, 
which  actually simplifies  the original proof of \cite{BK1}. 
The proof  of Theorem \ref{d>1} is given in the Appendix A (Section \ref{Dong}).
\end{rem}

Linearly repetitive Delone sets are good candidates to satisfy the condition of Theorem \ref{d>1} as suggested by the following result of Lagarias and Pleasants: 
\begin{thm}\label{thm:LP}\cite{LP} 
Let  $d\geq 2$ and $X$ be a linearly repetitive Delone set  in $\RR^d$  Then there exist 
positive constants $ \rho(X)$ and $\delta(X)<1$,  such that   for any cube $C$  with size $l(C)$, we have:
\begin{equation}\label{eq}|\N(X,C) - \rho(X)\mu_d(C)| = O(l(C)^{d-\delta(X)}). 
\end{equation}
\end{thm}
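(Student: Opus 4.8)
The plan is to prove \eqref{eq} by a multiscale renormalization of the point-count density, in which linear repetitivity forces a uniform contraction of the oscillation between the densest and the sparsest cube at each scale. For a cube $C$ of side $l$ write $D(C) = \N(X,C)/\mu_d(C)$, and, using half-open cubes $x+[0,l)^d$ so that subdivisions are exact partitions, set
\[
\bar P(l) = \sup_{l(C)=l} D(C), \qquad \bar m(l) = \inf_{l(C)=l} D(C), \qquad \omega(l) = \bar P(l)-\bar m(l).
\]
Partitioning a cube of side $\kappa l$ into $\kappa^d$ aligned subcubes of side $l$ makes $\N(X,\cdot)$ additive, so $\bar P$ is nonincreasing and $\bar m$ is nondecreasing along the geometric scale $l,\kappa l,\kappa^2 l,\dots$; the whole statement will follow once I show that $\omega(l)$ decays like $l^{-\delta}$ for some $\delta<1$.

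First I would translate linear repetitivity into a statement about cubes: since a cube of side $l$ sits inside a ball of radius $r=\tfrac{\sqrt d}{2}l$ and $M_X(r)\le Lr$, every configuration realized in some side-$l$ cube reappears, as an exact translate, inside every cube of side $\kappa l$, where $\kappa$ is an integer depending only on $L$ and $d$ (and may be freely enlarged, so I take $\kappa\ge 3$). In particular the densest side-$\kappa l$ cube $C^+$ contains an exact translate $R$ of the sparsest side-$l$ pattern, so $\N(X,R)=\bar m(l)\,l^d$. Writing $\N(X,C^+)=\N(X,R)+\N(X,C^+\setminus R)$ and covering $C^+\setminus R$ by side-$l$ cubes, I obtain
\[
\bar P(\kappa l) \le \bar P(l) - \frac{\omega(l)}{\kappa^d} + \frac{C'}{\kappa l},
\]
and the symmetric lower bound for $\bar m(\kappa l)$, whence
\[
\omega(\kappa l) \le \Bigl(1 - \frac{2}{\kappa^d}\Bigr)\,\omega(l) + \frac{C''}{\kappa l}.
\]
Iterating this recursion with $\theta = 1-2/\kappa^d$, which satisfies $1/\kappa < \theta < 1$ for $\kappa\ge 3$ and $d\ge2$, the additive errors form a geometric series dominated by $\theta^n$, giving $\omega(\kappa^n)=O(\theta^n)$ and hence $\omega(l)=O(l^{-\delta})$ with $\delta=\log_\kappa(1/\theta)$, which indeed obeys $0<\delta<1$. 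Since $\bar P$ decreases to and $\bar m$ increases to a common limit $\rho(X)>0$, summing the geometric tail of the oscillations yields $|D(C)-\rho(X)|=O(l^{-\delta})$ for every cube, and multiplying by $\mu_d(C)=l^d$ gives \eqref{eq}; sizes that are not powers of $\kappa$ are handled by sandwiching between consecutive powers of $\kappa$ at bounded volume cost.

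The main obstacle is the boundary bookkeeping in the renormalization step: linear repetitivity only provides translated copies of patterns, so the sparse copy $R$ inside $C^+$ need not align with the subdividing grid, and covering $C^+\setminus R$ by side-$l$ cubes produces an overlap/leftover layer of volume $O(\kappa^{d-1}l^d)$, i.e. a density error of size $O(1/(\kappa l))$. The decisive point to verify is that this additive error decays strictly faster than the oscillation itself, which is precisely the content of $\delta<1$: because $\theta>1/\kappa$, the term $C''/(\kappa l)=O(\kappa^{-n})$ is absorbed by $\theta^n$ and does not degrade the rate. Getting the constant $\kappa$ and the covering estimate right --- including the passage from the ball-patches in the definition of $M_X$ to half-open cube patterns that partition exactly --- is where the care lies, but none of it interferes with the contraction mechanism.
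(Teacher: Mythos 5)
Your proposal cannot be compared with a proof in the paper, because the paper does not prove this statement at all: it is quoted from Lagarias--Pleasants \cite{LP}. So the only question is whether your renormalization argument stands on its own, and it does not: the error term in your key recursion is wrong, and the mistake is fatal rather than cosmetic. You claim that covering $C^+\setminus R$ by side-$l$ cubes costs a density error $O(1/(\kappa l))$, coming from an overlap/leftover layer of volume $O(\kappa^{d-1}l^d)$. But a layer of volume $O(\kappa^{d-1}l^d)$ inside a cube of volume $\kappa^d l^d$ is a \emph{relative} error $O(1/\kappa)$, not $O(1/(\kappa l))$; there is no source for the extra factor $1/l$, because the mismatch created by misaligned cubes of side $l$ is a layer of thickness of order $l$ (not of order $1$), and uniform discreteness allows it to carry $\Theta(\kappa^{d-1}l^{d})$ points. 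With an $l$-independent error, iterating $\omega(\kappa l)\le\theta\,\omega(l)+C/\kappa$ only yields $\limsup_l \omega(l)=O\bigl(\kappa^{d-1}\bigr)$, which is vacuous.

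In fact the loss is intrinsic to the misalignment and always at least cancels the gain, so the contraction mechanism never engages at all. Align the grid with $C^+$ instead, so that the only defect is near $R$: if $R$ is not aligned with the grid in some coordinate, a slicing argument (every line parallel to that coordinate axis must meet at least $\kappa$ covering cubes, whether or not it passes through $R$, since the segment of length $(\kappa-1)l-a$ with $a/l\notin\ZZ$ already needs $\kappa-1$ cubes and the complementary segment one more; integrating over the cross-section gives $N l^{d-1}\ge \kappa(\kappa l)^{d-1}$) shows that \emph{any} covering of $C^+\setminus R$ by side-$l$ cubes uses $N\ge\kappa^d$ cubes. Hence the best estimate your scheme can produce is
\begin{equation*}
\N(X,C^+)\;\le\; \bar m(l)\,l^d+\kappa^d\,\bar P(l)\,l^d,
\qquad\text{i.e.}\qquad
\bar P(\kappa l)\;\le\;\bar P(l)+\frac{\bar m(l)}{\kappa^d},
\end{equation*}
which is strictly \emph{worse} than the trivial subdivision bound $\bar P(\kappa l)\le\bar P(l)$: inserting the sparse copy never buys anything, because trading a translated, non-grid-aligned pattern against a fixed cubical grid at the same scale always loses at least one full cube's worth of points ($\ge \bar m(l)\,l^d$), while the maximal conceivable gain is only $\omega(l)\,l^d$, and $\omega(l)$ is supposed to tend to $0$ while $\bar m(l)\to\rho>0$. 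This obstruction is precisely why the actual proof in \cite{LP} is substantially more involved: the known arguments build a hierarchical structure out of the Delone set itself, so that regions of one level decompose \emph{exactly} into regions of the previous level (finitely many types, no grid misalignment), and the contraction acts on that hierarchy. Your outline has no mechanism playing this role, so its central step fails.
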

\begin{rem} Lagarias and Pleasants proved a stronger version of the above theorem by giving similar estimates for the occurrences of every patch in $X$. We will not use this stronger version here. 
\end{rem}

\proof[Proof of Theorem \ref{thm:main}]
Let $X$ be a linearly repetitive Delone set in $\RR^d$.
By Theorem \ref{d>1}, it is enough to show that there
exists  $\rho>0$ such that the product  $\prod_{m=1}^\infty E_\rho(2^m)$  converges. 
Indeed, Theorem \ref{thm:LP}, tells us that there exists $\rho(X), M, \delta$ and $l_0$, all positive, such that 
\[
\left|\frac{\N(X, C)}{\mu_d(C)}-\rho(X)\right|\le Ml(C)^{-\delta},
\]
for  every cube  $C$ in $\RR^d$ with side  $l(C)\ge l_0$. 
Since $\rho(X)>0$, a simple computation shows that there exist constants $M', l_1 > 0$ such that 
\[
\max \left(\left|\frac{\rho(X)\mu_d(C)}{\N(X,C)}-1\right|,\left|\frac{\N(X, C)}{\rho(X)\mu_d(C)}-1\right|\right)\le M'
l(C)^{-\delta} 
\]
for every  cube  $C$ with side $l(C)\ge l_1$ and thus
\[1\le e_{\rho(X)}(C)\le 1+ M' l(C)^{-\delta}.\]
Taking the supremum we get
\begin{equation}\label{eq:1} 
1\le E_{\rho(X)}(l(C)) \le 1+ M' l(C) ^{-\delta},\, \textrm{for all }  l(C)\ge l_1. 
\end{equation}
It follows that $\sum_{m=1}^{\infty}\log E(2^m)$ converges which  
implies that $\prod_{m=0}^{\infty}E(2^m)$ also converges and the conclusion now follows.\endproof

\section{Proof of Theorem \ref{thm:2}}

First let us introduce some notations. A {\it unit cube} in $\RR^d$ is a cube with size 1 whose vertices have integer coordinates. We denote by $\U$ the set of all subsets of $\RR^d$ which coincide with a finite union of  unit cubes.  Similarly, for all $\delta>0$, we denote by $\U_\delta$  the set of all subsets of $\RR^d$ which coincide with a finite union of cubes with size $\delta$ whose vertices have coordinates in $\delta\ZZ^d$  . In \cite{Lacz}, Laczkovich obtained the following very elegant characterization
of Delone sets that can be obtained from a lattice $\beta\ZZ^d$ by a bounded
displacement.

\begin{thm}\label{thm:Laczkovich} \cite{Lacz}
For any $d\geq2$ and for  every Delone set $X$ in $\RR^d$ and  every $\alpha > 0,$ 
the following statements are equivalent:
\begin{enumerate}
 \item There exists $K>0$ such that for  every subset $U \in \U$, 
\begin{equation}\label{eq:lac}
\abs{\N (X,  U) - \alpha \mu_d(U)} \le K \mu_{d-1}(\partial U); 
\end{equation}

\item There is a bounded displacement from $X $ onto $ \alpha^{-1/d} \ZZ^d$.
\end{enumerate}
\end{thm}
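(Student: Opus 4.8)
The plan is to prove the two implications separately, with (2)$\Rightarrow$(1) being routine and (1)$\Rightarrow$(2) carrying essentially all the weight. Throughout, write $L=\alpha^{-1/d}\ZZ^d$, so that $L$ is a lattice of density $\alpha$, and for a set $V$ let $V^{+C}$ and $V^{-C}$ denote its outer and inner $C$-neighborhoods.

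For (2)$\Rightarrow$(1), suppose $\phi\colon X\to L$ is a bijection with $\sup_{x}\abs{\phi(x)-x}\le C$. Since $\phi$ moves points by at most $C$, every $x\in X\cap U$ has $\phi(x)\in L\cap U^{+C}$, while every $\lambda\in L\cap U^{-C}$ has $\phi^{-1}(\lambda)\in X\cap U$; hence $\N(L,U^{-C})\le \N(X,U)\le \N(L,U^{+C})$. Because $L$ is a lattice, $\N(L,V)=\alpha\mu_d(V)+O(\mu_{d-1}(\partial V))$ for any region with rectifiable boundary, and for $U\in\U$ the shell $U^{+C}\setminus U^{-C}$ has both volume and boundary measure $O(\mu_{d-1}(\partial U))$, with constants depending only on $C$ and $d$. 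Combining these estimates yields $\abs{\N(X,U)-\alpha\mu_d(U)}\le K\mu_{d-1}(\partial U)$.

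For (1)$\Rightarrow$(2), I would first reformulate the goal: a bounded displacement from $X$ onto $L$ with displacement at most $R$ is exactly a perfect matching in the bipartite graph $G_R$ whose parts are $X$ and $L$ and in which $x$ and $\lambda$ are joined when $\abs{x-\lambda}\le R$. Since $X$ is Delone and $L$ is discrete, $G_R$ is locally finite, so by the marriage theorem for locally finite bipartite graphs it admits a perfect matching as soon as Hall's condition holds on both sides: $\abs{N_R(A)}\ge\abs{A}$ for every finite $A\subseteq X$, and $\abs{N_R(B)}\ge\abs{B}$ for every finite $B\subseteq L$, where $N_R$ denotes the neighbor set in $G_R$. (One produces a matching saturating each side separately and combines them into a perfect matching by a Cantor--Bernstein argument for matchings.) It then remains to choose $R$ so large that (1) forces both Hall conditions. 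For the $X$-side, given a finite $A\subseteq X$ I would enclose it in a region $W$ that is a finite union of cubes aligned with $L$, with $A\subseteq W\subseteq A^{+c}$ for a constant $c$ depending only on the cube size; then $N_R(A)\supseteq L\cap W^{+(R-c)}$. Applying (1) to $W$ (up to a harmless change of cube grid, which costs only another $O(\mu_{d-1}(\partial W))$) bounds $\abs{A}\le\N(X,W)\le\alpha\mu_d(W)+K'\mu_{d-1}(\partial W)$ from above, while the essentially exact lattice count bounds $\abs{N_R(A)}\ge\N(L,W^{+(R-c)})\ge\alpha\mu_d(W)+\alpha\bigl(\mu_d(W^{+(R-c)})-\mu_d(W)\bigr)-O(\mu_{d-1}(\partial W^{+(R-c)}))$ from below. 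The decisive point is that for $R$ large the volume gained by fattening dominates the surface terms, giving $\abs{N_R(A)}\ge\abs{A}$; the $L$-side is symmetric, using the same two-sided bound (1) together with the triviality of the discrepancy of $L$.

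I expect the main obstacle to be precisely this last isoperimetric trade-off, made uniform over all finite $A$. The fattening surplus $\mu_d(W^{+(R-c)})-\mu_d(W)$ is of the desired order $R\,\mu_{d-1}(\partial W)$ for well-shaped $W$, but it can be far smaller when $W$ has thin cavities or pockets of size below $R$ that the fattening merely fills, so a crude application of (1) to an arbitrary enclosing region need not close the inequality. Overcoming this requires reducing to regions of controlled shape — replacing $A$ by the points of $X$ in a \emph{filled}, cube-aligned hull, and estimating $\mu_{d-1}(\partial W^{+(R-c)})$ against $\mu_{d-1}(\partial W)$ — so that the surface discrepancy genuinely converts into a volume surplus once $R$ exceeds a threshold determined by $K$, $\alpha$ and $d$. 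This geometric bookkeeping, rather than the combinatorial marriage step, is where I anticipate the real work lies.
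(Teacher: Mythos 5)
First, a remark on the comparison you asked for: the paper does not prove this statement at all. It is quoted from Laczkovich \cite{Lacz} and used as a black box, so there is no in-paper proof to measure your argument against; I can only assess it on its own terms.

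Your direction (2)$\Rightarrow$(1) is correct and routine, and your reduction of (1)$\Rightarrow$(2) to Hall's condition in the locally finite bipartite graph $G_R$ (saturating matchings on each side, combined by a Cantor--Bernstein argument) is the right skeleton --- it is essentially how Laczkovich proceeds. But the gap you flag at the end is not a bookkeeping issue to be deferred: verifying Hall's condition \emph{is} the theorem, and your proposal contains no argument for it. Concretely, you need that for every finite union $W$ of lattice cells the volume gained by passing to the $R$-neighborhood exceeds $K\mu_{d-1}(\partial W)/\alpha$, for some $R$ depending only on $K$, $\alpha$ and $d$. One step of fattening (adjoining all cells adjacent to $W$) does gain volume at least $c_{d,\alpha}\,\mu_{d-1}(\partial W)$, since each boundary face is a face of exactly one cell outside $W$ and each such cell has only $2d$ faces; but $c_{d,\alpha}$ is a fixed constant while $K$ is arbitrary, and after one step the boundary can collapse, so iterating does not produce a gain of order $R\,\mu_{d-1}(\partial W)$. (Take $W\subset\RR^2$ a comb with $N$ teeth of length $N$: $\mu_1(\partial W)\asymp N^2$ while $\mu_1(\partial W^{+1})\asymp N$, so the total gain after $k$ steps is $\asymp N^2+kN$, never a large multiple of $\mu_1(\partial W)$ for bounded $k$.) Your proposed remedy --- replace $W$ by a filled, well-shaped hull $V$ --- is not automatic either: it improves the upper bound on $\abs{A}$ via (1) and monotonicity, but the filled hull need not lie within bounded distance of $A$ (a cavity of diameter $D$ contains points at distance $D/2$ from $W$), so the inclusion $L\cap V^{+(R-c)}\subseteq N_R(A)$ on which your lower bound rests can fail.

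What is missing is precisely the isoperimetric-type lemma for finite unions of unit cubes that constitutes the technical heart of \cite{Lacz} (or a substitute, e.g.\ a multiscale decomposition of $W$ in the spirit of Burago--Kleiner that converts the boundary discrepancy at each dyadic scale into a volume surplus at the next). As written, your proposal proves the easy implication, correctly frames the hard one, and then explicitly leaves unproved the step where all the difficulty is concentrated; it is an accurate road map, not a proof.
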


\begin{rem}\label{rescaling}Notice that by rescaling Theorem
\ref{thm:Laczkovich} works as well if we prove Equation \eqref{eq:lac} for all
subsets $U$ in some $\U_\delta$.
\end{rem}
Using Laczkovich's characterization and the above remark, Theorem \ref{thm:2} turns to be a straightforward
corollary of the following result, whose  proof follows arguments introduced in \cite{ACG} and is given in Appendix B, Section \ref{Nous}.


\begin{thm}\label{lem:key}
Let $ d>0$ and let  $\S$ be a  substitution  rule with dilation factor $\lambda$ and 
primitive substitution matrix $M_\S$. If $r(M_\S)< \lambda$, then  there exist $\delta>0$, 
$K>0$ and $\alpha > 0$ such that for every tiling $\T$ in $\Omega_\S$,
\begin{equation}\label{eq.cubosconexos}
\abs{\N (X_\T,  U) - \alpha \mu_d(U)} \le K \mu_{d-1}(\partial U)
\end{equation} 
for every  subset $U\in \U_\delta$.
\end{thm}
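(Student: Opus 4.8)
The plan is to deduce the Laczkovich inequality \eqref{eq.cubosconexos} from the self-similar hierarchy carried by each $\T\in\Omega_\S$ together with the spectral gap of $M_\S$, by counting tiles scale by scale so that the error is produced only by the supertiles meeting $\partial U$. Write $\mu=\lambda^{d}$ for the Perron eigenvalue, let $\v$ be a right and $\mathbf{w}$ a left Perron eigenvector normalized by $\mathbf{w}^{T}\v=1$, and set $r=r(M_\S)$. Since substituting a tile of type $q_i$ subdivides $\lambda q_i$ into $m_{i,j}$ tiles of type $q_j$, the vector of type-volumes $\mathbf{a}=(\mu_d(q_1),\dots,\mu_d(q_n))^{T}$ satisfies $M_\S\mathbf{a}=\lambda^{d}\mathbf{a}$, so $\mathbf{a}=c\,\v$ for some $c>0$. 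Call an $\E$-copy of $\lambda^{m}q_i$ that is subdivided into level-$0$ tiles a \emph{level-$m$ supertile of type $i$}: it contains exactly $\mathbf{e}_i^{T}M_\S^{m}\mathbf{1}$ level-$0$ tiles and has volume $\lambda^{dm}\mu_d(q_i)$. Setting $\alpha=(\mathbf{1}^{T}\mathbf{w})/c$ and using Perron--Frobenius in the form $M_\S^{m}=\lambda^{dm}\,\v\mathbf{w}^{T}+R_m$ with $\norm{R_m}=O(m^{\ell}r^{m})$, where $\ell\ge0$ is fixed by the Jordan structure of $M_\S$, the leading terms of $\mathbf{e}_i^{T}M_\S^{m}\mathbf{1}=\lambda^{dm}v_i(\mathbf{1}^{T}\mathbf{w})+O(m^{\ell}r^{m})$ and of $\alpha\,\lambda^{dm}\mu_d(q_i)=\alpha\,c\,\lambda^{dm}v_i$ cancel, so that the tile count of any level-$m$ supertile $S$ obeys
\[
\abs{\,\#\{\text{level-}0\text{ tiles in }S\}-\alpha\,\mu_d(S)\,}=O(m^{\ell}r^{m}),
\]
uniformly in the type of $S$ and in $\T$.

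Because $\T\in\Omega_\S=\bigcap_{k\ge0}\I_\S^{k}(\Omega_{\E,\P})$, for each $k$ one has $\T=\I_\S^{k}(\T^{(k)})$ with $\T^{(k)}\in\Omega_{\E,\P}$, and a compactness argument lets us choose the $\T^{(k)}$ coherently, i.e.\ with $\I_\S(\T^{(k+1)})=\T^{(k)}$. This fixes, once and for all, a hierarchy grouping the tiles of $\T$ into level-$m$ supertiles for every $m$, each level-$m$ supertile being a union of level-$(m-1)$ supertiles. Write $\tau(W)$ for the number of level-$0$ tiles contained in a union $W$ of level-$0$ tiles. Given $U\in\U_\delta$, let $V_m$ be the union of all level-$m$ supertiles contained in $U$; since a level-$m$ supertile in $U$ splits into level-$(m-1)$ supertiles all lying in $U$, we get $V_N\subseteq\cdots\subseteq V_1\subseteq V_0\subseteq U$, and choosing $N$ with $\lambda^{N}>\diam{U}$ forces $V_N=\emptyset$. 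Telescoping the exact quantities $\tau(V_m)-\alpha\,\mu_d(V_m)$ gives
\[
\tau(V_0)-\alpha\,\mu_d(V_0)=\sum_{m=0}^{N-1}\ \sum_{S\in V_m\setminus V_{m+1}}\bigl(\#\{\text{level-}0\text{ tiles in }S\}-\alpha\,\mu_d(S)\bigr),
\]
where $V_m\setminus V_{m+1}$ runs over the level-$m$ supertiles lying in $U$ whose level-$(m+1)$ parent is not contained in $U$.

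Such a supertile lies within distance $O(\lambda^{m+1})$ of $\partial U$, has diameter $O(\lambda^{m})$, and is disjoint from the others; a bounded-geometry/shadow argument then bounds the number of terms in the inner sum by $C\,\mu_{d-1}(\partial U)\,\lambda^{-m(d-1)}$, uniformly in $U$ and $\T$. Together with the per-supertile estimate, the $m$-th inner sum is at most $C\,\mu_{d-1}(\partial U)\,m^{\ell}(r/\lambda^{d-1})^{m}$. As $d\ge2$ we have $r<\lambda\le\lambda^{d-1}$, so the series $\sum_{m\ge0}m^{\ell}(r/\lambda^{d-1})^{m}$ converges and
\[
\abs{\tau(V_0)-\alpha\,\mu_d(V_0)}\le C'\,\mu_{d-1}(\partial U)
\]
with $C'$ independent of $U$ and $\T$; this is the heart of the proof and the only place the spectral hypothesis enters.

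It remains to replace $\tau(V_0)$ by $\N(X_\T,U)$ and $\mu_d(V_0)$ by $\mu_d(U)$. The set $U\setminus V_0$ is covered by the level-$0$ tiles straddling $\partial U$, and for $\delta$ chosen of the order of the tile diameters their number and total volume are both $O(\mu_{d-1}(\partial U))$; since every barycenter lies within a bounded distance of its tile, the symmetric difference between $\{t:\text{barycenter of }t\in U\}$ and the tiles constituting $V_0$ consists only of such straddling tiles. Hence $\abs{\N(X_\T,U)-\tau(V_0)}$ and $\alpha\abs{\mu_d(U)-\mu_d(V_0)}$ are each $O(\mu_{d-1}(\partial U))$, and adding the three estimates yields \eqref{eq.cubosconexos}. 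The step I expect to absorb most of the work is the uniform boundary count: proving that the number of level-$m$ supertiles hugging $\partial U$ is $O(\mu_{d-1}(\partial U)\lambda^{-m(d-1)})$ simultaneously for all $U\in\U_\delta$ of arbitrary size and all $\T\in\Omega_\S$, given that supertiles, though of uniformly bounded geometry after rescaling by $\lambda^{-m}$, need not be convex. This is exactly where restricting to finite unions of $\delta$-cubes---so that $\partial U$ stays piecewise flat with controlled Minkowski neighborhoods---and the hypothesis $r<\lambda$ are used in an essential way.
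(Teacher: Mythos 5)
Your overall strategy---hierarchical supertile decomposition, a Perron--Frobenius estimate per supertile, and a telescoping sum in which only the supertiles whose parent straddles $\partial U$ contribute---is the same as the paper's, and the telescoping identity and the final passage from $\tau(V_0),\mu_d(V_0)$ to $\N(X_\T,U),\mu_d(U)$ are sound. The fatal gap is exactly the step you flagged: the claim that the number of level-$m$ supertiles contained in $U$ with parent not contained in $U$ is $O\bigl(\mu_{d-1}(\partial U)\lambda^{-m(d-1)}\bigr)$ is false. First, you never reduce to connected $\partial U$, and any collar/shadow bound of the form $\mu_d(\{x: d(x,\partial U)\le t\})\le Ct\,\mu_{d-1}(\partial U)$ fails when $t$ exceeds the scale of a boundary component: take $U$ a huge cube from which $N$ single $\delta$-cubes have been removed, pairwise far apart. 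Each hole contributes a bounded positive number of supertiles to $V_m\setminus V_{m+1}$ for every $m$ with $\delta\ll\lambda^m\ll(\text{hole spacing})$, so the count stays $\gtrsim N$ over a long range of $m$, while each hole adds only $2d\delta^{d-1}$ to $\mu_{d-1}(\partial U)$; your bound would force $\lambda^{m(d-1)}\lesssim\delta^{d-1}$. Second, even for connected $U$ with connected $\partial U$ (the paper's first reduction), the claim fails for $d\ge 3$: drill thin tunnels of cross-section $\delta^{d-1}$ and length $T$ into a cube of side $L$ from one face. A tunnel is essentially one-dimensional, so it is met by $\asymp T\lambda^{-m}$ level-$(m+1)$ supertiles, each shedding children into $V_m\setminus V_{m+1}$; the count near each tunnel decays only like $\lambda^{-m}$, not $\lambda^{-m(d-1)}$, while the tunnel adds only $\asymp\delta^{d-2}T$ to $\mu_{d-1}(\partial U)$. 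Choosing the number, length and spacing of the tunnels appropriately violates your bound. The correct general per-level decay is a single factor of $\lambda^{-1}$, and indeed your argument, were it valid, would prove the theorem under the weaker hypothesis $r(M_\S)<\lambda^{d-1}$ when $d\ge 3$, which should have raised suspicion.

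This is precisely why the paper measures boundary complexity by $\L(\T,\partial U)$, the number of tiles meeting $\partial U$, rather than by $\mu_{d-1}(\partial U)$: it first proves Theorem \ref{lem:key1}, reduces to connected $U$ with connected boundary (which disposes of the holes), and then uses the path/diameter argument of Lemmas \ref{lem:diam_jordan_arc} and \ref{lem:bord} to show $\L(\T^{l+1},\partial U)\lesssim\lambda^{-(l+1)}\L(\T,\partial U)$ as long as $\L(\T^{l+1},\partial U)>K_\T$, the short tail being controlled by \eqref{lem:lambda_m}. The resulting series $\sum_l\L(\T^{l+1},\partial U)\rho^l\lesssim\L(\T,\partial U)\sum_l(\rho/\lambda)^l$ converges precisely because one may choose $r(M_\S)<\rho<\lambda$; only at the very end is $\L(\T,\partial U)$ converted into $\mu_{d-1}(\partial U)$ via local finiteness (Corollary \ref{cor}). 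To repair your proof you would need to (i) carry out the topological reduction to connected boundary and (ii) replace $\mu_{d-1}(\partial U)\lambda^{-m(d-1)}$ by the number of level-$m$ supertiles meeting $\partial U$ together with a one-scale comparison lemma; at that point you will have reconstructed the paper's argument.
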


\section{Appendix A: proof of Theorem \ref{d>1} \label{Dong} }
Burago and Kleiner proved that Theorem  \ref{d>1} is a consequence of the following proposition that they prove in dimension 2.
\begin{prop}\label{BuKl}
Let $ u:\RR^d\to \RR$ be a positive function which is constant on each open unit
cube with vertices in $\ZZ^d$, and  let $\rho >0$ be given. 
Assume that $u$ and $1/u$ are bounded.
For any cube $C$  in
$\RR^d$, let $e(C)$ be the quantity
\[\max \left\{\frac{\rho}{\frac{1}{\vert C\vert}\int_Cu}, \frac{\frac{1}{\vert C\vert}\int_Cu}{\rho}\right\},\]
where $\vert C\vert =\mu_d(C) $ is the Lebesgue measure of the cube $C$. Define
an ``error'' function 
$E: \NN \to \RR$ by letting $E(k)$ be the supremum of $e(\cdot)$ taken over the
collection of cubes of the form $[i_1, i_1+k] \times \dots[i_d, i_d+k]$, where
$(i_1, \dots, i_d) \in \ZZ^d$.
If the product 
\[\prod_{i=1}^{+\infty} E(2^i)\]
converges, then there exists a bi-Lipschitz homeomorphism $\Psi:\RR^d\to \RR^d$
with $\mathrm{det} (\nabla \Psi) =  u \,\,\, a.e.$.  
\end{prop}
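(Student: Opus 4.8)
The plan is to read the conclusion as the prescribed Jacobian (volume form) equation $\det(\nabla\Psi)=u$ and to solve it by a multiscale induction over dyadic cubes, replacing the two-dimensional volume-form argument of \cite{BK1} by the resolution operator of Rivière and Ye \cite{RY}, which is available in every dimension $d\ge 2$. First I would reduce to $\rho=1$: replacing $u$ by $u/\rho$ and post-composing the sought map with the dilation $x\mapsto \rho^{1/d}x$ multiplies the Jacobian by $\rho$ and leaves $E$ unchanged, so it suffices to treat $\rho=1$. In this normalization the hypothesis says that the mean value $\overline{u}_Q:=\frac{1}{\mu_d(Q)}\int_Q u$ of any cube $Q$ of size $2^m$ obeys $E(2^m)^{-1}\le \overline{u}_Q\le E(2^m)$, with $E(2^m)\ge 1$, $E(2^m)\to 1$, and $\prod_{m\ge1}E(2^m)<\infty$; equivalently $\varepsilon_m:=E(2^m)-1\ge 0$ is summable.

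The local tool I would invoke is the following form of the Rivière--Ye construction: on a cube, given a positive density $f$ with the correct total mass, there is a bi-Lipschitz homeomorphism $\phi$ equal to the identity on the boundary with $\det(\nabla\phi)=f$ a.e., whose bi-Lipschitz constant is bounded by a quantity $\Lambda(\|f\|_\infty,\|1/f\|_\infty)$ that tends to $1$ linearly as $f\to 1$ uniformly. I would apply this only to piecewise constant densities that compare a cube to its $2^d$ dyadic children. Concretely, fix a large cube $Q_0=[0,2^N]^d$ and construct $\Psi^{(N)}$ top-down through the dyadic tree: in the transition from scale $2^k$ to $2^{k-1}$, inside the image of each cube $Q$ of size $2^k$ I redistribute volume among the children so that the child $Q^{(i)}$ receives volume $\int_{Q^{(i)}}u$. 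Normalizing that image to the unit cube, the required density is piecewise constant with values $\overline{u}_{Q^{(i)}}/\overline{u}_Q\in[(E(2^k)E(2^{k-1}))^{-1},\,E(2^k)E(2^{k-1})]$, so the local operator yields a single-scale map $h_k$, equal to the identity on the boundaries of all scale-$2^k$ cubes, with bi-Lipschitz constant at most $\Lambda_k:=\Lambda\bigl(E(2^k)E(2^{k-1})\bigr)$.

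Because each $h_k$ fixes the coarser cube boundaries, the composition $\Psi^{(N)}=h_1\circ\cdots\circ h_N$ is volume-correct at every scale, and its bi-Lipschitz constant is at most $\Lambda_0\prod_{k\ge1}\Lambda_k$, where $\Lambda_0$ bounds the affine correction inside the unit cubes (finite since $u$ and $1/u$ are bounded and $u$ is constant on each unit cube). Using $\Lambda_k-1=O\bigl(\varepsilon_k+\varepsilon_{k-1}\bigr)$ together with the summability of $\varepsilon_m$, this product converges, so the bi-Lipschitz constant of $\Psi^{(N)}$ is bounded uniformly in $N$. After fixing a base point to prevent drift, the uniform bound makes $\{\Psi^{(N)}\}$ and their inverses precompact for local uniform convergence; the constructions are consistent across levels, so a subsequence converges to a bi-Lipschitz homeomorphism $\Psi:\RR^d\to\RR^d$. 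Since $\det(\nabla\Psi^{(N)})=u$ holds off the measure-zero dyadic skeleton for every $N$, the identity $\det(\nabla\Psi)=u$ passes to the limit almost everywhere.

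The hard part will be the quantitative, dimension-independent estimate for the single-scale maps: producing boundary-fixing solutions of the prescribed-Jacobian equation whose distortion degrades only linearly in the deviation of the density from a constant, so that $\prod_k\Lambda_k$ is genuinely controlled by $\prod_m E(2^m)$. This is precisely the content supplied by \cite{RY} that substitutes for the planar argument of \cite{BK1}; the remaining work --- verifying boundary compatibility of the $h_k$ across scales, carrying out the exhaustion limit, and checking the almost-everywhere Jacobian identity --- should be routine given that estimate.
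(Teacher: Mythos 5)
Your proposal is correct and follows essentially the same route as the paper: a multiscale dyadic construction in which the Rivi\`ere--Ye boundary-fixing solution of the prescribed-Jacobian equation, with distortion linear in the deviation of the density from a constant, is applied at each scale transition, the bi-Lipschitz constants are multiplied and controlled by $\prod_m E(2^m)$, and an Arzel\`a--Ascoli limit produces $\Psi$. The only cosmetic difference is that the paper realizes the parent-to-$2^d$-children redistribution as a composition of $d$ successive coordinate halvings using the two-piece form of the Rivi\`ere--Ye lemma, whereas you invoke the piecewise-constant-density version in one step.
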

We prove this proposition in any dimension. Our proof  is shorter than the specific one done in the two dimensional case in   \cite{BK1} and relies on the following lemma\footnote{Actually $(iv)$ is not explicitly written in \cite{RY} but turns to be  a straightforward consequence of the construction.} proved by D. Ye and T. Rivi\`ere:
\begin{lemma}\cite{RY}\label{RiYe}
Let $D= [0, 1]^d$, $A = [0, 1]^{d-1}\times [0, 1/2] $ and $B = [0, 1]^{d-1}\times [1/2, 1] $ . Let $\alpha>0$ and $\beta >0$ such that $\alpha +\beta =1$. There exists a bi-Lipschitz homeomorphism $\Phi$  from $D$ into itself such that:
\begin{itemize}
\item [(i)] $\Phi(x) =x, \quad x\in \partial D$;
\item [(ii)] $\det (\nabla \Phi) \equiv 2 \alpha$ in $A$ and $\det (\nabla \Phi) \equiv 2 \beta$ in $B$;
\item [(iii)] $\Vert \nabla (\Phi -Id)\Vert_{L^\infty(D)}\leq   C_\eta\vert \beta- \alpha\vert $;
\item [(iv)] $\Vert \nabla (\Phi^{-1} -Id)\Vert_{L^\infty(D)}\leq  C_\eta\vert \beta- \alpha\vert $,
\end{itemize}
where $0<\eta\leq\alpha$ , $0<\eta\leq\beta$ and $C_\eta$ only depends on $\eta$.

\end{lemma}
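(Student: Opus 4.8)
\emph{The plan} is to realize $\Phi$ as a perturbation of the identity of size $O(|\beta-\alpha|)$ and to obtain the exact Jacobian prescription by solving a divergence equation with homogeneous Dirichlet data, following the deformation scheme of Dacorogna and Moser. Write $\rho = 2\alpha\,\mathbf 1_A + 2\beta\,\mathbf 1_B$ for the target Jacobian; note that $\int_D \rho = \alpha+\beta = 1 = \mu_d(D)$, the necessary compatibility condition, and that $\rho-1 = (\beta-\alpha)(\mathbf 1_A - \mathbf 1_B)$ has $\|\rho-1\|_\infty = |\beta-\alpha|$ and mean zero. Seeking $\Phi = \mathrm{Id} + \xi$ with $\xi = 0$ on $\partial D$ — which is exactly condition (i) — the equation $\det(\nabla\Phi) = \rho$ rewrites as $\mathrm{div}\,\xi = (\rho-1) - \mathcal N(\nabla\xi)$, where $\mathcal N(\nabla\xi)$ collects the minors of order $\ge 2$ of $\nabla\xi$ and satisfies $|\mathcal N(\nabla\xi)| \le C\,\|\nabla\xi\|_\infty^2$ for $\|\nabla\xi\|_\infty$ small. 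I would solve this by a fixed-point iteration: given a right inverse $\mathcal B$ of the divergence with $\mathcal B(h)|_{\partial D} = 0$, set $\xi_0 = \mathcal B(\rho-1)$ and $\xi_{k+1} = \mathcal B\big((\rho-1) - \mathcal N(\nabla\xi_k)\big)$. Since $\xi|_{\partial D}=0$ forces $\int_D \mathcal N(\nabla\xi) = 0$, the mean-zero compatibility is preserved along the iteration; for $|\beta-\alpha|$ small the map is a contraction, its fixed point $\xi$ obeys $\|\nabla\xi\|_\infty \le C_\eta |\beta-\alpha|$, which is (iii), and $\det(\nabla\Phi)=\rho$ gives (ii).

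Everything then rests on one ingredient: a right inverse $\mathcal B$ of the divergence on $D$, vanishing on $\partial D$, that is bounded from the piecewise-constant (hence merely $L^\infty$) right-hand sides into the Lipschitz maps $W^{1,\infty}$, with operator norm controlled by $\eta$. This is precisely the content of the Rivi\`ere--Ye construction \cite{RY}: for the cube, and for data of the type occurring here, it furnishes such a solution operator, and because it only gains Lipschitz rather than $C^1$ regularity, the resulting $\Phi$ is bi-Lipschitz rather than a diffeomorphism — which is all that is needed, and is unavoidable given that $\rho$ is discontinuous. The dependence on $\eta$ enters at this point: the lower bounds $\alpha,\beta\ge \eta$ keep $\rho$ bounded away from $0$, so that the inverse deformation stays nondegenerate. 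Concretely, the identity $\beta-\alpha = 1-2\alpha$ shows that even a pure one-dimensional shear in the $x_d$-variable already produces $\|\nabla(\Phi^{-1}-\mathrm{Id})\|_\infty \sim |\beta-\alpha|/(2\alpha)\le |\beta-\alpha|/(2\eta)$, which is the mechanism behind the factor $C_\eta$.

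\emph{The main obstacle} is exactly this $L^\infty \to W^{1,\infty}$ bound with the correct scaling, and two natural shortcuts both fail, which is why a dedicated construction is required. First, the classical Bogovskii right inverse of the divergence is \emph{not} bounded from $L^\infty$ into $W^{1,\infty}$ (there is a logarithmic loss), so it cannot simply be quoted. Second, the most tempting route — transporting Lebesgue measure to $\rho\,dx$ by the Moser flow of a time-dependent field solving a continuity equation — is genuinely obstructed: keeping the contact discontinuity of $\rho$ on the fixed hyperplane $\{x_d = 1/2\}$ would demand zero normal flux there, whereas the divergence theorem on $A$ forces the net normal flux across $\{x_d=1/2\}$ to equal $\int_A(\rho-1) = (2\alpha-1)\mu_d(A) = \tfrac{2\alpha-1}{2}\ne 0$. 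The Rivi\`ere--Ye deformation circumvents this by fanning the excess volume horizontally toward the side walls, so that the image of the interface is pinned at its endpoints on $\partial D$ while the Jacobian stays exactly piecewise constant and the whole boundary is fixed pointwise; this is the geometric heart of the lemma.

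Finally, property (iv) is a straightforward consequence of (iii), as the footnote indicates. Once $\|\nabla(\Phi-\mathrm{Id})\|_\infty = \|\nabla\Phi - I\|_\infty \le C_\eta|\beta-\alpha|$ is strictly below $1$, the matrix $\nabla\Phi$ is everywhere invertible, and from $\nabla(\Phi^{-1}) = \big[(\nabla\Phi)^{-1}\big]\circ \Phi^{-1}$ together with the elementary bound $\|M^{-1}-I\|\le \|M-I\|/(1-\|M-I\|)$ one gets $\|\nabla(\Phi^{-1}-\mathrm{Id})\|_\infty \le C_\eta|\beta-\alpha|/(1-C_\eta|\beta-\alpha|)$, which is (iv) after enlarging $C_\eta$ by a factor depending only on $\eta$. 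Equivalently, $\Phi^{-1}$ is itself a boundary-fixing bi-Lipschitz self-map of $D$ with $\det(\nabla\Phi^{-1}) = 1/(\rho\circ\Phi^{-1})$, whose distance to $1$ in $L^\infty$ is $\|\rho^{-1}-1\|_\infty \le |\beta-\alpha|/(2\eta)$, so the very same perturbative construction applied backward yields the symmetric estimate.
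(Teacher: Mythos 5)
First, a point of comparison: the paper does not prove this lemma at all. It is imported verbatim from Rivi\`ere--Ye \cite{RY}, with a footnote observing that item (iv), while not stated there, is a straightforward consequence of \emph{their construction}. So the only question is whether your argument stands on its own, and it does not: your scheme is perturbative, while the lemma is not. Writing $\Phi=\mathrm{Id}+\xi$ and iterating $\xi_{k+1}=\mathcal{B}\bigl((\rho-1)-\mathcal{N}(\nabla\xi_k)\bigr)$ is a contraction only when $\|\rho-1\|_{\infty}=|\beta-\alpha|$ is small, as you yourself concede. But the statement asserts the conclusion for \emph{every} pair $\alpha,\beta\ge\eta$ with $\alpha+\beta=1$, i.e.\ for $|\beta-\alpha|$ as large as $1-2\eta$, and this full range is exactly what the paper uses: in the proof of Proposition \ref{BuKl} the lemma is applied with $\alpha=\alpha^p_{\bar n , m, i, \bar k}(\epsilon)$, which is only known to lie in $[\eta^\star,1-\eta^\star]$, since for small $i$ the quantities $E(2^{i-1})$ need not be close to $1$. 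In that regime there is no smallness, the quadratic term $\mathcal{N}(\nabla\xi)$ is not dominated by the linear one, and the fixed-point argument gives nothing. The same defect invalidates your derivation of (iv) from (iii): the bound $\|M^{-1}-I\|\le \|M-I\|/(1-\|M-I\|)$ requires $C_\eta|\beta-\alpha|<1$, which fails in general --- indeed $\Phi^{-1}$ must expand $\Phi(A)$ by the volume factor $1/(2\alpha)\le 1/(2\eta)$, which forces $C_\eta\ge (2\eta)^{-1/d}-1$, so for small $\eta$ and $|\beta-\alpha|$ of order one the quantity $C_\eta|\beta-\alpha|$ exceeds $1$. (Note also that the paper's footnote attributes (iv) to the construction in \cite{RY}, not to a formal consequence of (iii).)

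There is a second, structural problem: the one ingredient you do not prove --- a right inverse $\mathcal{B}$ of the divergence on the cube, vanishing on $\partial D$ and bounded from $L^\infty$ into $W^{1,\infty}$ --- is precisely the hard analytic content you propose to quote from \cite{RY}. Since the lemma under discussion \emph{is} a citation of \cite{RY}, delegating the key step back to that paper makes the argument circular as a proof; and it also mischaracterizes what \cite{RY} supplies: for this two-cell configuration their result is an explicit, global (non-perturbative) bi-Lipschitz deformation that fans the excess volume of $A$ toward the lateral walls, with piecewise-constant Jacobian and estimates valid for all $\alpha,\beta\ge\eta$, rather than an abstract $L^\infty\to W^{1,\infty}$ solution operator for $\mathrm{div}$. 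A correct self-contained proof would have to produce such an explicit map (or an equivalent non-perturbative mechanism); your reduction to a small-data implicit-function argument cannot be repaired by adjusting constants.
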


In order to prove Proposition \ref{BuKl}, we adapt to our context a constructive method  developed in \cite{RY}.  Let $\bar n = (n_1, \dots, n_d)$ be a point in $\ZZ^d$ and $m>0$ be a positive integer. Consider the cube:
\[C_{\bar n , m}\, =\, \prod_{l= 1}^{d} [n_l, n_l +2^{m}].\]
In each of these cubes, for each integer $i$, $0\leq i\leq m,$ and each integer vector 
$\bar k = (k_1, \dots, k_d)$ in $\Lambda_{m,i}:=\ZZ^d\cap\prod_{l=1}^{d}[0, 2^{m-i})$, 
we consider the smaller cube:
\[C_{\bar n , m, i, \bar k}\, =\, \prod_{l= 1}^{d} [n_l +k_l2^{i} , n_l +(k_l+1)2^{i}].\]
Notice that for each $0\leq i\leq m,$ and every pair  $\bar k \neq \bar k'$ ,the cubes $C_{\bar n , m, i, \bar k}$  and $C_{\bar n , m, i, \bar k'}$ have disjoint interiors. Moreover, 
\[\bigcup_{\bar k\in  \Lambda_{m,i}} C_{\bar n , m, i, \bar k} \,=\, C_{\bar n , m}.\]

Take $\epsilon = (\epsilon_1, \dots, \epsilon_d) \in \{0, 1\}^d$, let $1\leq p\leq d$ and denote by $A^p_{\bar n , m, i, \bar k}(\epsilon)$, $B^p_{\bar n , m, i, \bar k}(\epsilon)$ and $D^p_{\bar n , m, i, \bar k}(\epsilon)$ the following subsets of 
$C_{\bar n , m, i, \bar k}$:

\begin{multline*}
A^p_{\bar n , m, i, \bar k} (\epsilon) =
\prod_{l =1}^{p-1} [n_l +k_l2^{i} , n_l +(k_l+1)2^{i}]\\\times  [n_p+k_p2^{i} , n_p +(k_p+\frac{1}{2})2^{i}]\times 
\prod_{l =p+1}^{d} [n_l +(k_l +\frac{\epsilon_l}{2})2^{i} , n_l +(k_l+\frac{\epsilon_l+1}{2})2^{i}],
\end{multline*}
\begin{multline*}
B^p_{\bar n , m, i, \bar k} (\epsilon) =
\prod_{l =1}^{p-1} [n_l +k_l2^{i} , n_l +(k_l+1)2^{i}]\\\times  [n_p+(k_p+\frac{1}{2})2^{i} , n_p +(k_p+1)2^{i}]\times 
\prod_{l =p+1}^{d} [n_l +(k_l +\frac{\epsilon_l}{2})2^{i} , n_l +(k_l+\frac{\epsilon_l+1}{2})2^{i}],
\end{multline*}
and 
\[ D^p_{\bar n , m, i, \bar k}(\epsilon) = 
A^p_{\bar n , m, i, \bar k}(\epsilon) \cup B^p_{\bar n , m, i, \bar k}(\epsilon).\]
Let  $ u:\RR^d\to \RR$ be a positive function which  is constant on each open unit cube with vertices in $\ZZ^d$ and consider  the two positive numbers
$$\alpha^p_{\bar n , m, i, \bar k} (\epsilon) = 
\frac{ \int_{A^p_{\bar n , m, i, \bar k} (\epsilon) }u(x) dx}{ \int_{D^p_{\bar n , m, i, \bar k} (\epsilon) }u(x) dx},$$
and 
$$\beta^p_{\bar n , m, i, \bar k} (\epsilon) =\frac{ \int_{B^p_{\bar n , m, i, \bar k} (\epsilon) }u(x) dx}{ \int_{D^p_{\bar n , m, i, \bar k} (\epsilon)}u(x) dx}.$$
We clearly have 
$$\alpha^p_{\bar n , m, i, \bar k} (\epsilon) + \beta^p_{\bar n , m, i, \bar k} (\epsilon)  =1.$$

\noindent 
\begin{lemma} 
$\forall\epsilon\in \{0, 1 \}^d,\,   \forall p \in \{1, \dots, d\},\,   \forall  \bar k \in \Lambda_{m,i},   \forall  i \in \{1, \dots , m\},\,  \forall m>0, $\,  and \, $ \forall \bar n \in \ZZ^d,$ we have:

\[\frac{1}{2}\frac{1}{(E(2^{i-1}))^2} \leq \alpha^p_{\bar n , m, i, \bar k} (\epsilon) \leq \frac{1}{2}{(E(2^{i-1}))^2}\]
and 
\[\frac{1}{2}\frac{1}{(E(2^{i-1}))^2} \leq \beta^p_{\bar n , m, i, \bar k} (\epsilon) \leq \frac{1}{2}{(E(2^{i-1}))^2}.\]

Consequently, if $u$ satisfies the hypothesis of Proposition \ref{BuKl}, 
then there exists $0<\eta^\star< 1$  such that 
\[\eta^\star \leq \alpha^p_{\bar n , m, i, \bar k} (\epsilon)\]
and 
\[\eta^\star \leq \beta^p_{\bar n , m, i, \bar k} (\epsilon).\]
\end{lemma}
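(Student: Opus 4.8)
The plan is to reduce everything to the elementary observation that $A^p_{\bar n,m,i,\bar k}(\epsilon)$ and $B^p_{\bar n,m,i,\bar k}(\epsilon)$ are the two halves of $D^p_{\bar n,m,i,\bar k}(\epsilon)$ obtained by bisecting it in the $p$-th coordinate, so that they have equal measure: $\mu_d(A^p)=\mu_d(B^p)=\frac12\mu_d(D^p)$. Together with the already noted identity $\alpha^p+\beta^p=1$, it is then enough to prove the two-sided bound for $\alpha^p$, the argument for $\beta^p$ being identical. Throughout I suppress the indices $\bar n,m,i,\bar k,\epsilon$ whenever they are inert.

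The key step is geometric: although $A^p$, $B^p$ and $D^p$ are boxes and not cubes, each of them is a finite union of cubes of side $2^{i-1}$ with vertices in $\ZZ^d$. Reading off the defining products, every edge of these boxes has length either $2^i=2\cdot 2^{i-1}$ (the coordinates $l<p$, and, for $D^p$, also $l=p$) or $2^{i-1}$ (the coordinate $p$ for $A^p$ and $B^p$, and all coordinates $l>p$, where the term $\epsilon_l/2$ merely translates the interval). Halving each edge of length $2^i$ therefore tiles $A^p$ and $B^p$ by $2^{p-1}$ cubes of side $2^{i-1}$ and tiles $D^p$ by $2^{p}$ such cubes. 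Since $n_l,k_l\in\ZZ$, $\epsilon_l\in\{0,1\}$ and $2^{i-1}\in\NN$ for $i\ge1$, all these cubes have integer vertices; this is exactly where the standing hypothesis $i\ge1$ enters, and it is the one point requiring genuine care.

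With the decomposition established, the bound follows mechanically from the definitions of $e(\cdot)$ and $E(\cdot)$ in Proposition \ref{BuKl}. For any cube $C$ of side $2^{i-1}$ with integer vertices, unwinding $e(C)\le E(2^{i-1})$ gives
\[
\frac{\rho}{E(2^{i-1})}\,\mu_d(C)\ \le\ \int_C u\ \le\ \rho\,E(2^{i-1})\,\mu_d(C).
\]
Summing over the cubes that tile a region $R\in\{A^p,B^p,D^p\}$ and using additivity of the integral and the measure yields the same inequalities with $C$ replaced by $R$. Estimating the numerator of $\alpha^p=\frac{\int_{A^p}u}{\int_{D^p}u}$ from above, its denominator from below, and substituting $\mu_d(A^p)=\frac12\mu_d(D^p)$ produces $\alpha^p\le\frac12 E(2^{i-1})^2$; the reversed choice of estimates gives $\alpha^p\ge\frac12 E(2^{i-1})^{-2}$. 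The identical computation with $R=B^p$ settles $\beta^p$, which proves both displayed inequalities.

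For the final assertion I would use that the hypotheses of Proposition \ref{BuKl} make $u$ and $1/u$ bounded, whence $\frac{1}{\mu_d(C)}\int_C u$ always lies between $\inf u$ and $\sup u$, so that $E(k)\le M_0:=\max\{\rho/\inf u,\ \sup u/\rho\}$ uniformly in $k$. Since every $e(C)\ge1$ forces $M_0\ge1$, setting $\eta^\star:=\frac12 M_0^{-2}$ gives $0<\eta^\star<1$ and, by the lower bounds just proved, $\eta^\star\le\alpha^p$ and $\eta^\star\le\beta^p$ for all choices of the indices, as claimed. The only delicate point in the whole argument is the bookkeeping of the second paragraph—checking that each box really tiles into integer-vertex cubes of side $2^{i-1}$—after which everything is a routine application of the definition of $E$.
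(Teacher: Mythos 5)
Your proof is correct and follows essentially the same route as the paper: decompose $A^p$, $B^p$, $D^p$ into integer-vertex cubes of side $2^{i-1}$, apply the defining inequality $e(C)\le E(2^{i-1})$ cube by cube, sum, and take the ratio using $\mu_d(A^p)=\tfrac12\mu_d(D^p)$. You are merely more explicit than the paper about the tiling bookkeeping and about extracting the uniform bound $\eta^\star$ from the boundedness of $u$ and $1/u$, both of which the paper leaves as ``follows easily.''
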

\begin{proof}
The very definition of $E(2^i)$ shows that for all  $C_{\bar n , m, i, \bar k}$ we have:
\[
 \frac{1}{E(2^{i})}\rho \vert C_{\bar n , m, i, \bar k}\vert    
\leq \int_{C_{\bar n , m, i, \bar k}} u(x) dx\leq  E(2^{i})\rho \vert C_{\bar n , m, i, \bar k}\vert.
\]
These estimates remain true for a given $i$,  for every finite collection $R_i$ of cubes $C_{\bar n , m, i, \bar k}$   with disjoint interiors:
\[
 \frac{1}{E(2^{i})}\rho \vert R_i\vert    \leq \int_{R_i} u(x) dx\leq  E(2^{i})\rho \vert R_i\vert.
\]
We remark that  when $1\leq i\leq m$, $A^p_{\bar n , m, i, \bar k} (\epsilon) $ and $B^p_{\bar n , m, i, \bar k} (\epsilon) $  are precisely finite union of cubes 
$C_{\bar n , m, i-1, \bar k}$ with disjoint interiors. The proof of the lemma follows easily. 
\end{proof}

Up to  homothety and rotation, we are in a situation to apply Lemma \ref{RiYe}  and get a bi-Lipschitz homeomorphism  
$\Phi^p_{\bar n , m, i, \bar k} (\epsilon) $ from $D^p_{\bar n , m, i, \bar k} (\epsilon)$ into itself such that: 
\begin{enumerate}
\item [(i)] $\Phi^p_{\bar n , m, i, \bar k} (\epsilon) (x) =x, \quad x\in \partial D^p_{\bar n , m, i, \bar k} (\epsilon) $;
\item [(ii)] $\textrm{det} (\nabla\Phi^p_{\bar n , m, i, \bar k} (\epsilon) ) = 2 \alpha^p_{\bar n , m, i, \bar k} (\epsilon)$ in $A^p_{\bar n , m, i, \bar k} (\epsilon)$;
\item [(iii)] $\textrm{det} (\nabla \Phi^p_{\bar n , m, i, \bar k} (\epsilon) ) = 2 \beta^p_{\bar n , m, i, \bar k} (\epsilon)$ in $B^p_{\bar n , m, i, \bar k} (\epsilon)$;
\item [(iv)] $\Vert \nabla (\Phi^p_{\bar n , m, i, \bar k} (\epsilon) -Id) \Vert_{L^\infty(D^p_{\bar n , m, i, \bar k} (\epsilon))}\leq    \frac{1}{2}C_{\eta^\star} ((E(2^{i-1}))^2-(E(2^{i-1}))^{-2})$;
\item [(v)] $\Vert \nabla (\Phi^p_{\bar n , m, i, \bar k} (\epsilon)^{-1} -Id) \Vert_{L^\infty(D^p_{\bar n , m, i, \bar k} (\epsilon))}\leq   \frac{1}{2}C_{\eta^\star}((E(2^{i-1}))^2-(E(2^{i-1}))^{-2}) .$
\end{enumerate}
We denote by $\Phi^p_{\bar n , m, i, \bar k} $ the bi-Lipschitz homeomorphism from 
$C_{\bar n , m, i, \bar k}$ into itself defined by 
$\Phi^p_{\bar n , m, i, \bar k} (x) = \Phi^p_{\bar n , m, i, \bar k} (\epsilon) (x) $  
if $x\in D^p_{\bar n , m, i, \bar k} (\epsilon)$, 
and consider the bi-Lipschitz homeomorphism obtained by composition:
\[
\Phi_{\bar n , m, i, \bar k}  \,=\, \Phi^d_{\bar n , m, i, \bar k}\circ\Phi^{d-1}_{\bar n , m, i, \bar k}\circ \dots \circ \Phi^1_{\bar n , m, i, \bar k}. 
\]
Since the Jacobians of the  $\Phi^p_{\bar n , m, i, \bar k} $ are constant by parts, a simple calculation shows that
\[
 \textrm{det} (\nabla \Phi_{\bar n , m, i, \bar k})  \,= \,2^d \frac{ \int_{C_{\bar n , m, i-1, \bar k'}}u(x) dx}{ \int_{C_{\bar n , m, i, \bar k}}u(x) dx},
\]
for any sub-cube $C_{\bar n , m, i-1, \bar k'}$ in $C_{\bar n , m, i, \bar k}$. It is also direct to check that: 
\[
\Vert \nabla  \Phi_{\bar n , m, i, \bar k} \Vert_{L^\infty( C_{\bar n , m, i, \bar k})}\,\leq K_{i, d} \quad{\rm{and}}\quad 
\Vert \nabla  \Phi^{-1}_{\bar n , m, i, \bar k} \Vert_{L^\infty( C_{\bar n , m, i, \bar k})}\,\leq K_{i, d},
\]
where \[K_{i, d} = (1+ \frac{1}{2}{C_{\eta^\star}} ((E(2^{i-1}))^2-(E(2^{i-1}))^{-2}))^d.\]

We denote by $\Phi_{\bar n , m, i} $ the bi-Lipschitz homeomorphism from $C_{\bar n , m }$ 
into itself defined by $\Phi_{\bar n , m, i} (x) = \Phi_{\bar n , m, i, \bar k}  (x) $  
if $x\in C_{\bar n , m, i, \bar k}$, and we consider 
 the bi-Lipschitz  homeomorphism obtained by composition:
 \[
  \Psi_{\bar n , m } \,=\,\Phi_{\bar n , m, m}\circ\Phi_{\bar n , m, m-1 } \circ \dots \circ \Phi_{\bar n , m, 1}.
 \]
 The map $\Psi_{\bar n , m }$ is a bi-Lipschitz homeomorphism on $ C_{\bar n , m}$ which satisfies:
 \begin{itemize}
 \item  $\Psi_{\bar n , m } = Id $  on $\partial C_{\bar n , m}$;
 \item $\textrm{det} (\nabla \Psi_{\bar n , m }) 
         =   \frac{u}{\dfrac{1}{|C_{\bar n , m}|}\int_{C_{\bar n , m}}u(x) dx}$ in each unit cube with integer vertices in $ 
 C_{\bar n , m}$;
 \item $\Vert \nabla  \Phi_{\bar n , m}\Vert_{L^\infty( C_{\bar n , m})}\,\leq  
\prod_{i=1}^{m}K_{i, d} $\, and \,  
$\Vert \nabla  \Phi_{\bar n , m}^{-1}\Vert_{L^\infty( C_{\bar n , m})}\,\leq  \prod_{i=1}^{m}K_{i, d}.$

 \end{itemize}
 
\begin{lemma}\label{bof}
If the product 
$\prod_{i = 1}^{+\infty}  E(2^i)$
converges, then the product  $\prod_{i = 1}^{+\infty} K_{i, d}$ converges too.
\end{lemma}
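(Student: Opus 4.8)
The plan is to reduce the convergence of $\prod_{i} K_{i,d}$ to that of $\prod_{i} E(2^i)$ via the standard dictionary for infinite products whose factors are $\geq 1$: a product $\prod_i (1+a_i)$ with $a_i \geq 0$ converges if and only if $\sum_i a_i < \infty$. First I would record the elementary fact that $E(k)\geq 1$ for every $k$, since each $e(C)$ is the maximum of a positive quantity and its reciprocal and is therefore at least $1$; taking suprema gives $E(k)\ge 1$. Consequently the hypothesis that $\prod_i E(2^i)$ converges is equivalent to $\sum_i \bigl(E(2^i)-1\bigr)<\infty$, and in particular $E(2^i)\to 1$ as $i\to\infty$.

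Next, writing $t_i := E(2^{i-1}) \geq 1$, each factor has the form $K_{i,d}=(1+c_i)^d$ with
\[
c_i = \tfrac12\, C_{\eta^\star}\bigl(t_i^2 - t_i^{-2}\bigr) \geq 0 .
\]
Since $\sum_i \log K_{i,d} = d\sum_i \log(1+c_i)\leq d\sum_i c_i$, it suffices to prove $\sum_i c_i<\infty$. The heart of the argument is an estimate comparing $t_i^2-t_i^{-2}$ with $t_i-1$ as $t_i\to 1$. Using $t_i\ge 1$ (so $t_i^2\ge 1$) I factor
\[
t_i^2 - t_i^{-2} = \frac{t_i^4-1}{t_i^2}\leq t_i^4-1 = (t_i-1)(t_i+1)(t_i^2+1),
\]
and since $t_i\to 1$ there is an index beyond which $t_i\le 2$, so $(t_i+1)(t_i^2+1)\le 15$. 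This yields $c_i \leq \tfrac{15}{2}C_{\eta^\star}(t_i-1)$ for all large $i$.

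Finally, $\sum_i (t_i-1)=\sum_i \bigl(E(2^{i-1})-1\bigr)$ converges by the first step (it is merely a reindexing of $\sum_i (E(2^i)-1)$), so the comparison gives $\sum_i c_i<\infty$; hence $\sum_i \log K_{i,d}<\infty$ and $\prod_i K_{i,d}$ converges, as claimed. The only genuinely non-formal step is the quadratic-versus-linear comparison $t^2-t^{-2}=O(t-1)$ near $t=1$, which is where the assumption $E(2^{i-1})\to 1$ is essential; everything else is bookkeeping with the product/sum criterion, so I expect no real obstacle beyond being careful that all factors are $\geq 1$.
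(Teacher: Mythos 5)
Your proposal is correct and takes essentially the same route as the paper: both reduce the convergence of $\prod_i K_{i,d}$ to $\sum_i \bigl((E(2^{i-1}))^2-(E(2^{i-1}))^{-2}\bigr)<\infty$ via logarithms, and then bound this quantity by a constant multiple of $E(2^{i-1})-1$ (the paper factors through $E-E^{-1}\le 2(E-1)$, you through $t^4-1\le 15(t-1)$ for $t\le 2$), finally using that convergence of $\prod_i E(2^i)$ forces $\sum_i(E(2^i)-1)<\infty$. The differences are only cosmetic bookkeeping.
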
 
\begin{proof} It is enough to show that the product 
$$\prod_{i = 1}^{+\infty}  (1+ \frac{1}{2}{C_{\eta^\star}} ((E(2^{i-1}))^2-(E(2^{i-1}))^{-2}))$$
converges and this last product converges if the series:
$$\sum_{i = 1}^{+\infty} \ln (1+ \frac{1}{2}{C_{\eta^\star}} ((E(2^{i-1}))^2-(E(2^{i-1}))^{-2}))$$ converges. This will be true if  
$$\sum_{i = 1}^{+\infty} (E(2^{i-1}))^2-(E(2^{i-1}))^{-2}) <+\infty,$$ and thus if 
$$(\star) \quad \sum_{i = 1}^{+\infty} (E(2^{i-1}))-(E(2^{i-1}))^{-1}) <+\infty.$$
On the one hand, the convergence of  the product $\prod_{i = 1}^{+\infty}  E(2^i)$
implies  that the series
$$\sum_{i = 1}^{+\infty} \ln (1+ (E(2^{i-1}) -1)) $$   converges which in turn implies that
$$\sum_{i = 1}^{+\infty}  (E(2^{i-1}) -1) <+\infty. $$
On the other hand, for $i_0$ large enough
$$\sum_{i = i_0}^{+\infty} (E(2^{i-1}))-(E(2^{i-1}))^{-1}) \leq 2\sum_{i = i_0}^{+\infty} (E(2^{i-1}))-1)$$
 \end{proof}
 
 We extend each  $\Psi_{\bar n , m } $ by the Identity out of $ C_{\bar n , m}$ to get a bi-Lipschitz homeomorphism of $\RR^d$.

 It follows from Lemma \ref{bof} that the sequence of bi-Lipschitz constants of the  bi-Lipschitz homeomorphisms  $\Psi_{\bar n , m } $ 
 is bounded and thus, using Azerl\`a-Ascoli theorem, there exists an accumulation point $\Phi$ which is a bi-Lipschitz homeomorphism on $\RR^d$ and which satisfies: 
$$\textrm{det} (\nabla \Phi) = \frac{u}{\rho}$$ in each unit cube with integer vertices in $\RR^d$.  By post-composing $\Phi$ with  a homothety with ratio 
$\rho^{\frac{1}{d}}$ we get an  answer to Proposition \ref{BuKl}.

\vspace{0.4cm}

The reminder of the proof of Theorem  \ref{d>1} , which uses the Hall marriage lemma, follows exactly  \cite{BK1} 
and the dimension is irrelevant, see Section 4 in  \cite{BK1}.  
\section{Appendix B: Proof of Theorem \ref{lem:key}\label{Nous}}
We first state Theorem \ref{lem:key1} below, which is a generalization of the main theorem in \cite{ACG}. 
\begin{thm}\label{lem:key1}
Let $ d\geq 2$ and let  $\S$ be a  substitution  rule with dilation factor $\lambda$
and primitive substitution matrix $M_\S$. If $r(M_\S)< \lambda$, then  there
exist $\delta>0$, $K>0$ and $\alpha > 0$ such that for every tiling $\T$ in $\Omega_\S$,
\[
\abs{\N (\T,  U) - \alpha \mu_d(U)} \le K \L(\T, \partial U)
\] 
for every  $U\in \U_\delta$, where $\N (\T,  U)$ stands for the the number of tiles of $\T$ that are contained in $U$ and  $\L(\T, \partial U)$  stands for the number of tiles of $\T$ that intersect $\partial U$.  
\end{thm}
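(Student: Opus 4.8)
The plan is to combine the hierarchical (supertile) structure of admissible tilings with a spectral analysis of $M_\S$, showing that the deviation of $\N(\T,U)$ from its expected value $\alpha\mu_d(U)$ accumulates only along $\partial U$, at a rate controlled by the ratio $r(M_\S)/\lambda^{d-1}<1$.

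First, the spectral input. Let $\v=(\mu_d(p_1),\ldots,\mu_d(p_n))$ be the vector of prototile volumes. Since substituting a type-$i$ prototile dilates its volume by $\lambda^d$ and subdivides it into $(M_\S)_{i\ell}$ tiles of type $\ell$, one has $M_\S\v=\lambda^d\v$; as $\v>0$, this identifies $\v$ as the Perron eigenvector and re-proves $\mu=\lambda^d$. Let $\u>0$ be the left Perron eigenvector normalized by $\u\v=1$, and set $\alpha:=\u\mathbf 1$, where $\mathbf 1$ is the all-ones vector; then $\alpha>0$. The number of tiles inside a level-$j$ supertile of type $i$ is exactly $(M_\S^j\mathbf 1)_i$. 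Writing $\mathbf 1=\alpha\v+\mathbf w$ with $\mathbf w:=\mathbf 1-\alpha\v$, the component $\mathbf w$ is annihilated by the Perron projection $\v\u$ (indeed $\u\mathbf w=\alpha-\alpha=0$) and so lies in the complementary $M_\S$-invariant subspace, on which the spectral radius is $r(M_\S)$. Hence for any $\epsilon>0$ there is $C_\epsilon$ with
\[
\bigl|(M_\S^j\mathbf 1)_i-\alpha\,\lambda^{jd}\mu_d(p_i)\bigr|\le C_\epsilon\,(r(M_\S)+\epsilon)^j
\]
for all $j$ and $i$; that is, a level-$j$ supertile $S$ carries $\alpha\,\mu_d(S)+O((r+\epsilon)^{j})$ tiles.

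Next, the geometry. Because $\T\in\Omega_\S=\bigcap_k\I_\S^k(\Omega_{\E,\P})$, for each fixed depth $N$ there is $\T_N$ with $\I_\S^N\T_N=\T$; setting $\T_j:=\I_\S^{N-j}\T_N$ produces a nested hierarchy in which the tiles of $\T=\T_0$ group into level-$j$ supertiles, each level-$(j+1)$ supertile being a union of at most $B:=\max_i(M_\S\mathbf 1)_i$ level-$j$ supertiles. Given $U\in\U_\delta$, call a level-$j$ supertile \emph{maximal-in-$U$} if it is contained in $U$ while its parent is not. These supertiles have disjoint interiors and their union is exactly the union of the tiles contained in $U$, so
\[
\N(\T,U)-\alpha\mu_d(U)=\sum_{S\text{ max-in-}U}\bigl[(M_\S^{j(S)}\mathbf 1)_{i(S)}-\alpha\mu_d(S)\bigr]-\alpha\,\mu_d\!\Bigl(U\setminus\!\!\bigcup_{S\text{ max}}\!\!S\Bigr).
\]
The last term is bounded by $\max_i\mu_d(p_i)$ times the number of tiles meeting $\partial U$ (each uncovered point of $U$ lies in a tile straddling $\partial U$), hence by a constant times $\L(\T,\partial U)$. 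Each bracket is $O((r+\epsilon)^{j(S)})$ by the spectral estimate, and a maximal level-$j$ supertile has its parent among the level-$(j+1)$ supertiles meeting $\partial U$ (the parent is connected and contained neither in $U$ nor in its complement), so the number of maximal level-$j$ supertiles is at most $B\,L_{j+1}$, where $L_m$ is the number of level-$m$ supertiles meeting $\partial U$.

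It then remains to sum, and here lies the main obstacle: the uniform covering estimate $L_m\le c\,\mu_{d-1}(\partial U)\,\lambda^{-m(d-1)}$. Every level-$m$ supertile has volume comparable to $\lambda^{md}$ and diameter comparable to $\lambda^m$ (finitely many prototiles, each of fixed positive volume and finite diameter), so the supertiles meeting $\partial U$ are interior-disjoint bodies of volume $\gtrsim\lambda^{md}$ lying in the $O(\lambda^m)$-neighbourhood of $\partial U$, whose volume is $\lesssim\mu_{d-1}(\partial U)\,\lambda^m$ for $U\in\U_\delta$ with $\delta$ chosen large relative to the prototile sizes. Granting this, and using $d\ge2$ so that $r(M_\S)<\lambda\le\lambda^{d-1}$, I fix $\epsilon$ with $r(M_\S)+\epsilon<\lambda^{d-1}$ and get
\[
\sum_{S\text{ max}}(r+\epsilon)^{j(S)}\le B\sum_{j\ge0}(r+\epsilon)^j L_{j+1}\le B\,c\,\mu_{d-1}(\partial U)\sum_{j\ge0}\Bigl(\tfrac{r+\epsilon}{\lambda^{d-1}}\Bigr)^{j},
\]
a convergent geometric series. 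Since the same bounded-geometry considerations give $\mu_{d-1}(\partial U)\le c'\,\L(\T,\partial U)$, combining the two displayed estimates yields $|\N(\T,U)-\alpha\mu_d(U)|\le K\,\L(\T,\partial U)$ with $K,\delta,\alpha>0$ independent of $\T$ and of $U\in\U_\delta$. The delicate points to make fully rigorous are the uniformity (in $\T$ and $U$) of the neighbourhood-volume bound, the comparison $\mu_{d-1}(\partial U)\lesssim\L(\T,\partial U)$, and the existence of a nested hierarchy of the required depth for every admissible $\T$.
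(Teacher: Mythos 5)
Your overall architecture --- decompose the tiles of $\T$ inside $U$ into maximal supertiles, control the tile count of a level-$j$ supertile by $\alpha\mu_d(\cdot)+O(\rho^j)$ via Perron--Frobenius applied to $M_\S^j\mathbf 1$, and bound the number of maximal level-$j$ supertiles by $B$ times the number $L_{j+1}$ of level-$(j+1)$ supertiles meeting $\partial U$ --- is exactly the paper's (its hierarchical decomposition, Proposition \ref{prop.hier}, together with Corollary \ref{cor:PF}). The argument stands or falls on the one estimate you yourself single out as the main obstacle, and there it fails: the bound $L_m\le c\,\mu_{d-1}(\partial U)\,\lambda^{-m(d-1)}$ is false. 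Its justification rests on the claim that the $O(\lambda^m)$-neighbourhood of $\partial U$ has volume $\lesssim\mu_{d-1}(\partial U)\,\lambda^m$, which only holds when $\lambda^m$ is small compared with the feature scale of $\partial U$; here $\lambda^m$ grows without bound while the cubes building $U$ have fixed side $\delta$. Concretely, take $d=3$ and $U$ a $\delta\times\delta\times N\delta$ box: then $\mu_2(\partial U)\asymp N\delta^2$, but the $\lambda^m$-neighbourhood of $\partial U$ has volume $\asymp N\delta\,\lambda^{2m}+\lambda^{3m}$, and for $\lambda^m\asymp N\delta$ your bound would force $L_m\lesssim 1/N<1$ even though at least one level-$m$ supertile must meet the nonempty set $\partial U$. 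A warning sign is that your route would prove the theorem under the weaker hypothesis $r(M_\S)<\lambda^{d-1}$, whereas the stated (and needed) hypothesis is $r(M_\S)<\lambda$.

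The correct substitute, and what the paper proves (Lemma \ref{lem:bord}, via a packing-and-chaining argument that uses connectedness of $\partial U$, after a preliminary topological reduction to $U$ connected with connected boundary), is a bound of the form $\L(\T^{m},\partial U)\le (2K_\T+1)\,\lambda^{-m}\L(\T,\partial U)$, valid as long as $\L(\T^m,\partial U)>K_\T$; the remaining levels, where only boundedly many supertiles meet $\partial U$, are summed directly using the inequality $\lambda^{m}\lesssim\L(\T,\partial U)$ supplied by the hierarchical decomposition. With decay rate $\lambda^{-m}$ in place of $\lambda^{-m(d-1)}$ the series $\sum_j\rho^{j}L_{j+1}$ still converges, precisely because $r(M_\S)<\lambda$, and the final bound comes out directly in terms of $\L(\T,\partial U)$, so your auxiliary comparison $\mu_{d-1}(\partial U)\lesssim\L(\T,\partial U)$ is no longer needed. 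Your spectral step, the disjointness and exhaustion properties of the maximal supertiles, and the bound on the uncovered boundary layer are all sound; it is only the supertile-versus-boundary counting that must be redone along the lines above.
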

Let  us show how to prove Theorem \ref{lem:key} assuming Theorem \ref{lem:key1}. We  need to compare the number of tiles that intersect
the boundary of a subset $U\in \U_\delta$  with the measure of its boundary. 

For  a subset $A$ of $\RR^d$, define
\[r_\text{min}(A) := \sup\{r>0 \mid \text{there exists } x\in A  \text{ s.t.
}B(x,r)\subset A\}.\]

Given a tiling $\T$ of $\RR^d$, define
\[r_\T=\inf\{r_\text{min}(t)\mid t\in\T\}\]  
and 
\[R_{\T}=\sup\{\diam{t}\mid{t\in\T}\}/2.\]
If  $0<r_\T< R_\T<+\infty$, then we say that $\T$ is \emph{locally finite} and 
define $K_\T:=\lfloor 4^dR_\T^d r_\T^{-d}\rfloor$.  Notice that the substitution tilings we consider in this paper are locally finite. 
A simple computation yields
(see \cite{ACG} for details):
\begin{lemma} \label{bound}Let $d\geq 2$ and $\T$ be a locally finite tiling  of
$\RR^d$, then each ball of radius smaller than  or equal to $2R_\T$ meets at
most  $K_\T$ tiles of $\T$.  
\end{lemma}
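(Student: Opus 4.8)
The plan is to use a standard volume-packing argument. Fix a ball $B(z,\rho)$ with $\rho\le 2R_\T$, and let $t_1,\dots,t_N$ be the tiles of $\T$ that it meets; the goal is to show $N\le K_\T$.

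First I would confine every such tile to a single larger ball centered at $z$. For each $i$ pick a point $p_i\in t_i\cap B(z,\rho)$. Since $R_\T=\sup\{\diam{t}\}/2$, every tile satisfies $\diam{t_i}\le 2R_\T$, hence $t_i\subset B(p_i,2R_\T)$. Combining this with $|p_i-z|\le\rho\le 2R_\T$ and the triangle inequality gives $t_i\subset B(z,4R_\T)$ for every $i$. This is the only place the hypothesis $\rho\le 2R_\T$ is used, and it is precisely what produces the factor $4^d$ in $K_\T$.

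Next I would produce pairwise disjoint inscribed balls, one per tile, and compare volumes. By definition $r_\text{min}(t_i)\ge r_\T$ for every tile, so for each fixed small $\epsilon>0$ there is a point $x_i$ with $B(x_i,r_\T-\epsilon)\subset t_i$; the corresponding open ball, being an open subset of $t_i$, lies in the interior of $t_i$. Because the tiles have pairwise disjoint interiors, these open balls are pairwise disjoint, and by the previous step they all lie inside $B(z,4R_\T)$. Comparing Lebesgue measures yields
\[
N\,\omega_d\,(r_\T-\epsilon)^d \le \omega_d\,(4R_\T)^d,
\]
where $\omega_d=\mu_d(B(0,1))$, so that $N\le (4R_\T)^d/(r_\T-\epsilon)^d$. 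Letting $\epsilon\to 0$ gives $N\le 4^d R_\T^d r_\T^{-d}$, and since $N$ is an integer we conclude $N\le \lfloor 4^d R_\T^d r_\T^{-d}\rfloor = K_\T$.

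The argument is essentially routine, and I do not expect a genuine obstacle; the only points requiring care are the passage from a closed inscribed ball contained in a tile to an open ball lying in the (open) interior, which is what lets me invoke the disjointness of the tile interiors, and the use of the auxiliary parameter $\epsilon$ to handle the fact that $r_\text{min}(t_i)$ is defined as a supremum that need not be attained.
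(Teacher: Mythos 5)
Your packing argument is correct, and it is the standard ``simple computation'' the paper has in mind: the paper itself omits the proof and defers to \cite{ACG}, where the same volume comparison (tiles meeting the ball are trapped in $B(z,4R_\T)$, each contains an inscribed ball of radius essentially $r_\T$ with pairwise disjoint interiors) is carried out. Your handling of the two technical points --- passing to open balls to exploit disjointness of interiors, and the $\epsilon$ needed because $r_{\min}$ is a supremum --- is careful and sound.
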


\begin{cor}\label{cor}
Let $d\geq2$, $\T$ be a locally finite  tiling of $\RR^d$ and $\delta>0$. Then, for every
subset  $U \in \U_\delta$ we have:
\[\L(\T, \partial U) \leq K_\T \left(\frac{\sqrt{d-1} }{2R_\T} +1\right)^{d-1}  
\delta^{-(d-1)}\mu_{d-1}(\partial U).\]
\end{cor}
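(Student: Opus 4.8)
The plan is to reduce counting the tiles that meet $\partial U$ to the ball-counting estimate of Lemma \ref{bound}, by covering $\partial U$ economically with balls of radius $2R_\T$ and invoking the fact that each tile has diameter at most $2R_\T$.

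First I would record the structure of $\partial U$. Since $U\in\U_\delta$ is a finite union of cubes of side $\delta$ with vertices in $\delta\ZZ^d$, its boundary $\partial U$ is a union of $(d-1)$-dimensional faces, each an isometric copy of $[0,\delta]^{d-1}$ lying in a coordinate hyperplane, and these faces have pairwise disjoint relative interiors (an interior $(d-1)$-face is shared by two cubes of $U$ and does not survive in $\partial U$). Hence, writing $F$ for the number of boundary faces, $\mu_{d-1}(\partial U)=F\,\delta^{d-1}$, that is $F=\delta^{-(d-1)}\mu_{d-1}(\partial U)$. This is exactly the factor $\delta^{-(d-1)}\mu_{d-1}(\partial U)$ appearing in the statement.

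Next I would net each face. Subdivide a given face into $(d-1)$-dimensional subcubes of side $s=2R_\T/\sqrt{d-1}$, so that each subcube has diameter $s\sqrt{d-1}=2R_\T$ and therefore center-to-corner distance $R_\T$; this requires $\lceil \delta/s\rceil^{d-1}\le\bigl(\delta\sqrt{d-1}/(2R_\T)+1\bigr)^{d-1}$ subcubes, and I take the center of each as a ball center. The key covering claim is then that every tile meeting $\partial U$ is detected: if $t\cap\partial U\neq\emptyset$, choose $p\in t\cap\partial U$; the point $p$ lies on some face, hence within distance $R_\T\le 2R_\T$ of one of that face's centers $c$, so $p\in B(c,2R_\T)\cap t$ and $t$ meets $B(c,2R_\T)$. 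By Lemma \ref{bound} each such ball of radius $2R_\T$ meets at most $K_\T$ tiles, so summing over all centers gives
\[
\L(\T,\partial U)\ \le\ (\text{number of centers})\cdot K_\T\ \le\ K_\T\,F\,\Bigl(\tfrac{\sqrt{d-1}}{2R_\T}+1\Bigr)^{d-1}.
\]
Substituting $F=\delta^{-(d-1)}\mu_{d-1}(\partial U)$ yields the claimed inequality (the precise constant coming from the elementary count above, after collecting the rounding term into the ``$+1$''). Overcounting tiles that meet several balls is harmless since we only want an upper bound.

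The main obstacle is purely the elementary but fiddly covering/counting: picking the subcube side so that radius-$2R_\T$ balls genuinely suffice (via the diameter bound for tiles), controlling the ceiling that produces the ``$+1$'', and making sure that tiles straddling the interface between two faces are still captured — which is guaranteed because each face is netted in full and the point $p$ is taken on whichever face it happens to lie. Everything else is bookkeeping, converting the face count into $\mu_{d-1}(\partial U)$.
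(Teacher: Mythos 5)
Your proof is correct and follows essentially the same route as the paper: count the facets as $\delta^{-(d-1)}\mu_{d-1}(\partial U)$, cover each facet by $(d-1)$-subcubes of side $2R_\T/\sqrt{d-1}$ so that each sits inside a ball of radius $2R_\T$, and apply Lemma \ref{bound} to each ball. The only (shared) blemish is that the rounding term should really read $\bigl(\delta\sqrt{d-1}/(2R_\T)+1\bigr)^{d-1}$ before dividing by $\delta^{d-1}$, exactly as in the paper's own statement, so nothing substantive differs.
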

\begin{proof} Each facet of $\partial U$ is covered by less than  
$(\lfloor\sqrt{d-1}/(2R_\T) \rfloor+1)^{d-1}$ $d-1$-cubes of size $
{2R_\T}/{\sqrt{d-1}}$ having pairwise disjoint $d-1$-interiors. Each of these
cubes is included in a $d$-ball of radius $2R_\T$ and thus intersect a most
$K_\T$ tiles of $\T$. The number of facets of $ \partial U$ is equal to
$\delta^{-(d-1)}\mu_{d-1}(\partial U).$\end{proof}

Since there is exactly one point of
$X_\T$ in the center of each tile of $\T$, it follows that  
\begin{equation}
 0 \le \N(X_\T,U) - \N(\T,U) \leq \L(\T,\partial U).
 \end{equation}

Combining this inequality with  Theorem \ref{lem:key1} and Corollary \ref{cor}
 we obtain that there exists a constant $K>0$ such that
\begin{equation}
|\N(X_\T,U) - \alpha \mu_d(U)| < K\mu_{d-1}(\partial U)
\end{equation}
for every  for every $U\in \U_\delta$, which proves Theorem \ref{lem:key}. 

We now turn to the proof of Theorem \ref{lem:key1}].  First let us choose the 
adequate scale $\delta$. Let $\T$ be a tiling in $\Omega_\S$. We say that
$\delta$ {\it fits with} $\T$ if it is chosen large enough such that for any
subset $U\in \U_\delta$:
\begin{itemize}
\item $U$ contains a tile of $\T$;

\item for any connected component  $\C$ of $\partial U$, $K_\T< \N(\T, \C)$;
\item two distinct connected components of $\partial U$ cannot intersect the same tile of $\T$. 
\end{itemize}
From now, $\delta$ will be chosen to fit with $\T$. The proof of Theorem \ref{lem:key1} is divided in four key arguments. 

\noindent $\bullet$  The first argument is a topological simplification. 
We show that it suffices to consider $U$ in $\U_\delta$ 
such that $U$ and $\partial U$ are connected. Indeed,  
for any $U\in \U_\delta$, there is a finite collection $V_1, \dots, V_n$ of  
pairwise disjoint connected elements in $\U_\delta$ such that:
$$ U = \cup_{i=1}^{n} V_i.$$
We denote by $\partial V_{i, 0}, \dots \partial V_{i, p(i)}$ the connected components of $\partial V_i$, and fix   $\partial V_{i, 0}$ to be the connected component that bounds the component of the complementary of $V_i$ with infinite diameter. For $j= 1, \dots, p(i)$,  $\partial V_{i, j}$ is the boundary of an element  $V_{i,  j} \in \U_\delta$ which is connected  and whose interior does not  intersect $V_i$. We denote by $\hat V_i$ the union:
$$\hat V_i = V_i \cup V_{i, 1}\cup \dots\cup V_{i, p(i)},$$
it belongs to $\U_\delta$ is connected and its boundary $\partial V_{i, 0}$ is connected. 
  If Theorem  \ref{lem:key1} works for a  $\delta$ that fits with $\T$ and all $U\in \U_\delta$ which are connected and have a connected boundary, we obtain that there exists a constant $K>0$ such that, on  one hand:
\begin{equation}
|\N(\T,\hat V_i) - \alpha \mu_d(\hat V_i)| < K\L(\T, \partial V_{i, 0}),\quad \forall i\in \{1, \dots, n\},
\end{equation}
and on the other hand:
\begin{equation}
|\N(\T,V_{i, j}) - \alpha \mu_d(V_{i, j})| < K\L(\T, \partial V_{i, j}),\quad \forall i\in \{1, \dots, n\}, \quad \forall j\in \{1, \dots,p(i)\}.
\end{equation}
Since $$\N(\T,\hat V_{i}) =  \N(\T, V_{i}) + \sum_{j=1}^{p(i)}(\N(\T, V_{i,j}) +
\L(\T, \partial V_{i,j}))$$
and 
$$\mu_d(\hat V_{i}) = \mu_d(V_{i})+ \sum_{j=1}^{p(i)}\mu_d(V_{i,j}),$$
we get 
$$
|\N(\T, V_i) - \alpha \mu_d( V_i)| < (K+1)\left(\sum_{j=0}^{p(i)} \L(\T,
\partial V_{i, j})\right),\quad \forall i\in \{1, \dots, n\}.
$$
Summing on all the connected components of $U$ we get 
$$|\N(\T, U) - \alpha \mu_d( U)| < (K+1)\L(\T, \partial U).$$
From now we will restrict ourselves to the case when $U$ and $\partial U$ are connected. The remainder of the proof follows the same lines
of the proof of the main result in \cite{ACG}. 

\noindent $\bullet$
The second argument is a geometrical one and is developed in the following two lemmas. These lemmas are proved in \cite{ACG} for the two-dimensional case. The generalization of the first one to higher dimension is straightforward and therefore we skip it, while the proof of the second one requires slight modifications so is given for the convenience of the reader.
\let\Tt=\T
\begin{lemma}
\label{lem:diam_jordan_arc}
Let $\Tt$ be a locally finite tiling  and $\gamma$ be a simple curve.  
Then, 
\[ \operatorname{diam}(\gamma)\leq 2R_\Tt \L(\T, \gamma),\]
where  $\L(\T, \gamma)$ denotes the number of tiles that $\gamma$ intersects.
\end{lemma}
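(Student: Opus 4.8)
The plan is to estimate the Euclidean distance $\abs{x-y}$ between any two points $x,y\in\gamma$ by $2R_\Tt$ times the number of tiles meeting $\gamma$, and then take the supremum over all such pairs. If $\L(\T,\gamma)=+\infty$ there is nothing to prove, so I assume the collection $t_1,\dots,t_N$ of tiles of $\Tt$ that intersect $\gamma$ is finite, with $N=\L(\T,\gamma)$. Since $\Tt$ is locally finite, every tile satisfies $\diam{t_i}\le 2R_\Tt$, and this bound on the tile diameters is the only property of the tiling I will use.

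The key step is a chain argument resting on the connectedness of $\gamma$. The sets $C_i:=t_i\cap\gamma$ form a finite cover of $\gamma$, each closed in $\gamma$ because each $t_i$ is closed. I would introduce the nerve graph $G$ on the vertex set $\{1,\dots,N\}$, joining $i$ and $j$ by an edge whenever $C_i\cap C_j\neq\emptyset$, and argue that $G$ is connected: otherwise its vertices split as $I_1\sqcup I_2$ with no edge across, so that $\bigcup_{i\in I_1}C_i$ and $\bigcup_{j\in I_2}C_j$ would be two disjoint, nonempty, relatively closed subsets covering $\gamma$, contradicting the connectedness of the simple curve $\gamma$.

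Now fix $x,y\in\gamma$, and choose tiles $t_{i_1}\ni x$ and $t_{i_m}\ni y$. Connectedness of $G$ yields a simple path $t_{i_1},t_{i_2},\dots,t_{i_m}$ of distinct tiles (hence $m\le N$) with consecutive members meeting along $\gamma$; pick $q_j\in t_{i_j}\cap t_{i_{j+1}}$ for $1\le j\le m-1$, and set $q_0=x$, $q_m=y$. By the triangle inequality,
\[
\abs{x-y}\le \sum_{j=1}^{m}\abs{q_{j-1}-q_j}.
\]
For each $j$, both endpoints $q_{j-1}$ and $q_j$ lie in the single tile $t_{i_j}$, so $\abs{q_{j-1}-q_j}\le\diam{t_{i_j}}\le 2R_\Tt$; as there are exactly $m$ summands, $\abs{x-y}\le 2R_\Tt\,m\le 2R_\Tt\,N$. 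Taking the supremum over $x,y\in\gamma$ gives $\diam{\gamma}\le 2R_\Tt\,\L(\T,\gamma)$, as claimed.

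I expect the only real obstacle to be the rigorous justification that the nerve graph is connected and that the connecting path may be taken simple, so that its length is controlled by $N$; the telescoping estimate afterwards is a routine application of the triangle inequality, and the entire argument is insensitive to the dimension $d$, which is why the two-dimensional proof of \cite{ACG} generalizes without change.
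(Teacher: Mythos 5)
Your proof is correct, and it is worth noting that the paper itself supplies no proof of this lemma: it defers to the two-dimensional case in \cite{ACG} and declares the generalization ``straightforward'', so your argument is filling a genuine gap rather than competing with a written one. The chain argument you give is sound: the sets $C_i=t_i\cap\gamma$ are finitely many, nonempty, and closed in $\gamma$ (tiles are closed), so a disconnection of the nerve graph would split $\gamma$ into two disjoint nonempty relatively closed pieces, contradicting connectedness; a simple path in a connected graph on $N$ vertices has at most $N$ vertices; and the telescoping bound $\abs{x-y}\le\sum_{j=1}^m\abs{q_{j-1}-q_j}\le 2R_\T m\le 2R_\T N$ is correct since consecutive chain points $q_{j-1},q_j$ both lie in the single tile $t_{i_j}$, whose diameter is at most $2R_\T$ by definition of $R_\T$. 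Two small remarks. First, your proof never uses that $\gamma$ is simple, nor even that it is a curve --- only that it is connected; this is actually a feature, since the lemma is later applied to sets of the form $\gamma_i([0,t_i])$ where $\gamma_i$ is merely a path and need not be injective, so the weaker hypothesis is the one genuinely needed. Second, the reduction to the case $\L(\T,\gamma)<\infty$ is harmless but, in the intended applications, $\gamma$ is compact and local finiteness already forces finiteness, so nothing is lost.
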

\begin{lemma}
\label{lem:bord}
Let $\Tt$ be a locally finite tiling and  $\C$ be a compact arc connected subset of $\RR^d$. Then, for every locally
finite tiling 
$\Tt'$ satisfying  $R_{\Tt'} > R_\Tt$, and $K_\Tt < \L(\Tt', \C)\leq 
\L(\Tt,\C)$, we have:
\[\L(\Tt', \C) \leq (2K_{\Tt'} + 1) \frac{R_\Tt}{R_{\Tt'}} 
\L(\Tt,\C),\]
where $K_{\Tt'}$ is the constant defined in Lemma \ref{bound}. 
\end{lemma}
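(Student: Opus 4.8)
The plan is to split into two regimes according to how much larger $R_{\Tt'}$ is than $R_\Tt$, the threshold being $R_{\Tt'}=(2K_{\Tt'}+1)R_\Tt$. In the \emph{coarse} regime $R_{\Tt'}\le(2K_{\Tt'}+1)R_\Tt$ there is essentially nothing to do: then $(2K_{\Tt'}+1)\frac{R_\Tt}{R_{\Tt'}}\ge1$, so the asserted inequality is immediate from the standing hypothesis $\L(\Tt',\C)\le\L(\Tt,\C)$. All the work lies in the \emph{fine} regime $R_{\Tt'}>(2K_{\Tt'}+1)R_\Tt$ (so in particular $R_{\Tt'}>2R_\Tt$), where I would produce matching upper and lower bounds from a single maximal separated family on $\C$.

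First I would fix a maximal family $z_1,\dots,z_p\in\C$ whose pairwise distances all exceed $2R_{\Tt'}$. For the upper bound, maximality forces the balls $B(z_j,2R_{\Tt'})$ to cover $\C$; every tile of $\Tt'$ meeting $\C$ contains a point of $\C$, hence meets one of these balls, and since each ball has radius $2R_{\Tt'}$, Lemma~\ref{bound} applied to $\Tt'$ allows at most $K_{\Tt'}$ tiles per ball. This gives $\L(\Tt',\C)\le K_{\Tt'}\,p$. For the lower bound I would exhibit, inside each of the pairwise disjoint closed balls $\overline{B(z_j,R_{\Tt'}-R_\Tt)}$, a simple arc $\gamma_j\subseteq\C$ issuing from $z_j$ and reaching the bounding sphere. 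Such an arc exists because $\C$ is arc connected and, the $z_j$ being $2R_{\Tt'}$-separated, $\C$ must leave the ball; extracting a simple arc from a connecting path and cutting it at its first exit yields $\gamma_j$ with $\diam(\gamma_j)\ge R_{\Tt'}-R_\Tt$, so Lemma~\ref{lem:diam_jordan_arc} gives $\L(\Tt,\gamma_j)\ge (R_{\Tt'}-R_\Tt)/(2R_\Tt)$.

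The choice of radius $R_{\Tt'}-R_\Tt$ is exactly what guarantees that no tile of $\Tt$, having diameter at most $2R_\Tt$, can meet two distinct arcs: if one met both $\gamma_j$ and $\gamma_{j'}$, we would obtain $d(z_j,z_{j'})\le 2(R_{\Tt'}-R_\Tt)+2R_\Tt=2R_{\Tt'}$, contradicting the separation. Hence the collections of $\Tt$-tiles counted by the different $\gamma_j$ are disjoint, all lie among the tiles meeting $\C$, and $\L(\Tt,\C)\ge p\,(R_{\Tt'}-R_\Tt)/(2R_\Tt)$. Combining the two estimates gives
\[
\L(\Tt',\C)\le K_{\Tt'}\,p\le \frac{2K_{\Tt'}R_\Tt}{R_{\Tt'}-R_\Tt}\,\L(\Tt,\C)\le (2K_{\Tt'}+1)\frac{R_\Tt}{R_{\Tt'}}\,\L(\Tt,\C),
\]
where the last inequality is, by a direct computation, equivalent to $R_{\Tt'}\ge(2K_{\Tt'}+1)R_\Tt$, i.e.\ precisely the fine regime. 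Thus the two regimes meet exactly at the threshold and the bound follows in all cases.

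I expect the main obstacle to be the lower bound: arranging the separation radius so that the arcs $\gamma_j$ are simultaneously long enough to cross many tiles of $\Tt$ and far enough apart that these tiles are disjoint, together with the careful extraction of genuine simple arcs from arc connectivity. The remaining delicate point is the degenerate case $p=1$, where $\C$ fits inside a single ball $B(z_1,2R_{\Tt'})$ and the disjoint-arc construction is vacuous; this is where I would invoke the last hypothesis $K_\Tt<\L(\Tt',\C)$, which prevents $\C$ from being too small and makes this case compatible with the desired estimate.
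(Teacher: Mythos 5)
Your proposal is correct and follows essentially the same route as the paper: a maximal $2R_{\Tt'}$-separated family on $\C$ giving the upper bound $\L(\Tt',\C)\le K_{\Tt'}p$ via Lemma \ref{bound}, exit arcs of diameter $R_{\Tt'}-R_\Tt$ in disjoint balls giving the lower bound on $\L(\Tt,\C)$ via Lemma \ref{lem:diam_jordan_arc}, and the same case split (you fix the threshold $c=2K_{\Tt'}$ up front where the paper optimizes over $c$ at the end, which is only a cosmetic difference). Your treatment of the degenerate $p=1$ case via the hypothesis $K_\Tt<\L(\Tt',\C)$ is also exactly what the paper does.
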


\begin{proof}[Proof of lemma \ref{lem:bord} ]
Given $y\in\RR^d$, set  
\[C_{y} := \{t\in\Tt'\mid t\cap B_{2R_{\Tt'}}(y)\neq\emptyset\}\quad
\text{and}\quad\widehat{C}_y:= \cup_{t\in C_y} t.\]
We construct a finite subset $\mathcal{Y}=\{y_i\}_{i=1}^{p}$ of 
 $\C$ as follows. First, fix any point of $\C$ to be $y_1$.
Next,  suppose that $y_1,\ldots,y_j$ have been chosen in such a way that, for
each $i\in\{1,\ldots,j\}$, the point 
$y_i$ does not belong to $\widehat{C}_{y_k}$ for every $k\in\{1,\ldots,i-1\}$. 
Then, if the sets $\{\widehat{C}_{y_i}\}_{i=0}^{j}$ cover $\C$, we set
$p=j$ and the construction is completed; otherwise, we choose any point in the
intersection of 
$\C$ and the complement of $\cup_{i=1}^{j}  \widehat{C}_{y_i} $ as $y_{j+1}$ and continue iterating the construction.
Since $\L(\Tt', \C)$ is finite (this is because $\Tt'$ is locally finite and $\C$ is compact) and, in each iteration we add at least one tile of $\Tt'$ intersecting $\C$ to the region covered by $\{\widehat{C}_{y_i}\}_{i=0}^{j}$, the construction stops after finitely many iterations. Thus, we have constructed
a set $\mathcal{Y}=\{y_i\}_{i=1}^{p}$ with the following properties:
\begin{enumerate}
\item $\cup_{y\in \mathcal{Y}} \widehat{C}_{y}$ covers $\C$;
\item $d(y_i,y_j)>2R_\T$ for every $i,j\in\{1,\ldots,p\}$ with $i\neq j$.
\end{enumerate}
Lemma \ref{bound} implies that for each $i\in\{1,\ldots,p\}$, the set
$C_{y_i}$ contains at most  $K_{\Tt'}$ tiles of $\Tt'$. From (1), we deduce that
 \begin{equation}
\label{eq:est1_p}
\L(\Tt', \C)\leq p K_{\Tt'}.
\end{equation}
By hypothesis, $p>1$.

\vspace{0.2cm}

Now define $\mathcal{B}=\{B_i = B_{y_i}(R_{\Tt'}-R_\Tt)\}_{i=1}^{p}$. From (2)
it is clear that the balls in $\mathcal{B}$  are pairwise disjoint and that the minimal
distance between two distinct balls in $\B$ is (strictly) greater than $2R_\Tt$.
Fix $i\in\{1,\ldots,p\}$. Since $p>1$, there is a point $z\in\C$ that
belongs to the complement of $B_{i}$. It follows
that there is a path $\gamma_i:[0,1]\rightarrow \C$ such that
$\gamma_i(0)=y_i$ and $\gamma_i(1)=z$. It is clear that the map $\eta:t\mapsto
\norm{y_i-\gamma(t)}$ is continuous, $\eta(0)=0$ and $\eta(1)>R_{\Tt'}-R_\Tt$.
By continuity, we deduce that there exists $t_i\in[0,1]$ such that
$\gamma_i([0,t_i])$ is included in $B_i$ and
$\norm{\gamma_i(t_i)-y_i}=R_{\Tt'}-R_\Tt$.  Applying Lemma
\ref{lem:diam_jordan_arc} we get
\[ R_{\Tt'}-R_\Tt\leq\diam{\gamma_i([0,t_i])}\leq 2R_\Tt  \L( \Tt,
\gamma_i([0,t_i]))\]
for all $i\in\{1,\ldots,p\}$. Adding these inequalities yields
\begin{equation}
\label{eq:est2_p0}
p (R_{\Tt'}-R_\Tt) \leq 2R_\Tt \sum_{i=1}^p \L( \Tt,\gamma_i([0,t_i])).
\end{equation}
Now, observe that since the image of $\gamma_i([0,t_i])$ is included in $B_i$
for each $i$, and the distance between $B_i$ and $B_j$ with $i\neq j$ is greater
than $2R_\Tt$,
each tile in $\Tt$ that intersects $\gamma_i([0,t_i])$ does not intersect
$\gamma_j([0,t_j])$ for every $j\neq i$. On the other hand, the image of
$\gamma_i$ is included in $\C$. Hence, we deduce that
\begin{equation}
\label{eq:est2_p01}
p (R_{\Tt'} -R_{\Tt})\leq 2R_\T\L(\Tt, \C).
\end{equation} 
Finally, combining  \eqref{eq:est2_p01} and \eqref{eq:est1_p}  we get 
\begin{equation}
\label{eq:est2_p}
\L(\Tt', \C) \leq 2K_{\Tt'}\frac{R_\Tt}{R_{\Tt'}-R_\Tt}   \L(\Tt, \C).
\end{equation}
To finish the proof, fix $c>0$ arbitrarily and consider the following two cases. First, suppose that $R_{\Tt'}/R_{\Tt} \leq 1 + c$. Since  $\L(\Tt', \C) \leq \L(\Tt, \C) $, it follows that 
\begin{equation}
\label{eq:menorquec}
\L(\Tt', \C)\leq (1+c)\frac{R_\Tt}{R_{\Tt'}}\L(\Tt, \C).
\end{equation}
Now suppose that $R_{\Tt'}/R_{\Tt} > 1 + c$. It is easy to check that 
\[(1+c)(R_{\Tt'} - R_{\Tt}) > c R_{\Tt'}.\] 
Replacing this inequality in  
\eqref{eq:est2_p} we get 
\begin{equation}
\label{eq:est2_p2}
\L(\Tt', \C) \leq 2K_{\Tt'} \frac{R_\Tt}{R_{\Tt'}}\left(1+\frac{1}{c}\right)\L(\Tt, \C).
\end{equation}
Combining \eqref{eq:est2_p2} and \eqref{eq:menorquec} yields
\[\L(\Tt', \C)\leq \max\left\{1+c,2K_{\Tt'}\left(1+\frac{1}{c}\right)\right\}\frac{R_\Tt}{R_{\Tt'}}\L(\Tt, \C).\]
An easy computation shows that the last bound is optimal when $c = 2K_{\Tt'}$ and the conclusion follows.
\end{proof}

\noindent $\bullet$ The third argument is a combinatorial one, and is related to the notion of {\it hierarchical decompositions} for substitution tilings as studied in \cite{ACG}, which we recall now. Let $\S$ be a substitution rule with dilation factor $\lambda$  and $\T$  be a tiling in  $\Omega_{\E,\P}$. The substitution map $\I_\S$ is defined as follows: Each tile $t$ in $\T$  is first  \emph{dilated} by $\lambda$ and then replaced 
by $O(\S_{p_i})$, where $t$ is a $\E$-copy of $p_i$ and $O$ is the isometry in $\E$ sending $p_i$ to $t$. The union of all these tiles is clearly a tiling, which we denote $\I_\S(\T)$. 
Similarly, we also can define $\mathcal{J}_\S^{(l)}:\Omega_{\E,\lambda^{(l+1)}\P}\rightarrow \Omega_{\E,\lambda^{l}\P}$ in a natural way. Recall that $\Omega_\S$ is defined as
\[\Omega_S =\bigcap_{k\geq 0}\I_{\S}^k(\Omega_{\E, \P}) .\]
It is plain to check that the map $\I_{\S}$ is onto when restricted  to
$\Omega_{\S}$. 
This implies that for each admissible tiling $\T$, there is a  sequence 
$(\T^l)_{l\geq 0}$ of tilings, called a \emph{hierarchical sequence} of $\T$, 
such that
\begin{itemize}
\item $\T^0 = \T$;
\item for each $l\geq 0$, $\T^l \in \Omega_{\E,\lambda^l \P}$ and 
$\J^{(l)}(\T^{l+1}) = \T^l.$
 
\end{itemize} 
\begin{rem}
\label{Khierar}
For every tiling $\T$ in  $\Omega_{\S}$ and all $l \geq 0$, it is easy to  see
that $r_{\T^l} = \lambda^l r_\T >0$ and $R_{\T^l} = \lambda^l R_\T < +\infty$.
Thus, $ K_{\T^l} = K_{\T}$ for all $l\geq 0$. 
\end{rem}

\begin{prop}[{\cite{ACG}}] \label{prop.hier}
Let $\T$  be an admissible tiling for $\S$ and let $(\T^l)_{l\geq 0}$ be a 
hierarchical sequence of $\T$ and suppose  that $\delta>0$ fits $\T$. For every 
subset $U\in \U_\delta$ such that $U$ and $\partial U$ are connected, 
there exists a finite collection 
$U_0,\dots , U_{m-1}$ of closed subsets of $U$ such that:
\begin{enumerate}[(i)]
\item $U_\T =: \cup_{i=0}^{m-1} U_i  =\cup_{t\in u_\T} t$ where
 $u_\T =\{t\in\T \mid   t\subset U\}$;  
\item   all $U_l$'s  have pairwise  disjoint interiors;
\item for each $l\in \{0, \dots, m-1\}$, $U_l$ is a union of  tiles in $\T^l$,  
and does not contain a tile in $\T^{l+1}$; 
\item $U$  does not contain a tile of $\T^m$. 
\end{enumerate}
The collection $\{U_0, \dots , U_{m-1}\}$ is called a {\rm hierarchical 
decomposition} of  $U$ associated with $\T$. 
Moreover, 
\begin{equation}
\label{eq:borde}
\quad \N(\T^l, U_l) \leq\, \|M_\S\|_1\L(\T^{l+1}, \partial U),
\end{equation}
for all $l\in\{0,\ldots,m-1\}$, where $\|M_\S\|_1$ is the maximum absolute
column
sum of the substitution rule $M$, 
and
\begin{equation}
\label{lem:lambda_m}
\lambda^{m - l - 1}\leq \frac{R_\T}{r_\T} \L(\T^l, \partial U),
\end{equation}
for all $l\in\{0,\ldots,m-1\}$.
    \end{prop}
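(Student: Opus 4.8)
The plan is to build the decomposition by peeling the hierarchy from the top level downwards. For each $l\geq 0$ set
\[
G_l := \bigcup\{\, T\in\T^l \mid T\subset U\,\},
\]
the union of all level-$l$ tiles entirely contained in $U$. Since $\J^{(l)}(\T^{l+1})=\T^l$, every tile of $\T^{l+1}$ is a union of tiles of $\T^l$, so if a $\T^{l+1}$-tile lies in $U$ then so do all its $\T^l$-subtiles; hence $G_{l+1}\subseteq G_l$. Because $\delta$ fits $\T$, the set $U$ contains a tile of $\T^0=\T$, so $G_0=U_\T\neq\emptyset$; and as $U$ is bounded while $r_{\T^l}=\lambda^l r_\T\to\infty$, we have $G_l=\emptyset$ for $l$ large. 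I would then let $m$ be the least integer with $G_m=\emptyset$, so that $m\geq1$, the sequence $U_\T=G_0\supseteq G_1\supseteq\cdots\supseteq G_{m-1}\supsetneq G_m=\emptyset$ is well defined, and $U$ contains no tile of $\T^m$, which is exactly $(iv)$. Finally I set $U_l$ to be the union of those $\T^l$-tiles contained in $G_l$ but not in $G_{l+1}$.

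With this definition $(i)$--$(iii)$ are checked directly. Each $U_l$ is by construction a union of $\T^l$-tiles. Since $G_{l+1}$ is itself a union of $\T^l$-tiles (each $\T^{l+1}$-tile being one), every $\T^l$-tile in $G_l$ either lies in $G_{l+1}$ or has interior disjoint from $G_{l+1}$, so $U_l$ is well defined; comparing different levels, the interior of any tile of $U_l$ is disjoint from $G_{l+1}$, which contains $U_{l'}$ for every $l'>l$, giving the disjoint-interiors property $(ii)$. For the union, a point in the interior of $U_\T=G_0$ lies in $G_l$ for a largest index $l^\star\le m-1$, hence in $U_{l^\star}$, so $\bigcup_l U_l=U_\T$, which is $(i)$. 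For the non-containment in $(iii)$, if some $T''\in\T^{l+1}$ satisfied $T''\subset U_l\subset U$, then by definition of $G_{l+1}$ we would have $T''\subset G_{l+1}$, forcing its $\T^l$-subtiles into $G_{l+1}$ and contradicting their membership in $U_l$.

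For the first estimate \eqref{eq:borde} I would use the parent map of the hierarchy. Every $\T^l$-tile $T$ sits inside a unique $\T^{l+1}$-tile $\widehat T$. If $T\subset U_l$ then $T\not\subset G_{l+1}$, so $\widehat T\not\subset U$ (otherwise $\widehat T\subset G_{l+1}$ and hence $T\subset G_{l+1}$); since $T\subset U$ meets $U$ while $\widehat T$ also meets $\RR^d\setminus U$, connectedness of the ball-homeomorphic tile $\widehat T$ forces $\widehat T\cap\partial U\neq\emptyset$. Thus the parent map sends the $\N(\T^l,U_l)$ tiles of $U_l$ into the collection of $\T^{l+1}$-tiles meeting $\partial U$, whose cardinality is $\L(\T^{l+1},\partial U)$; as each $\T^{l+1}$-tile contains at most $\|M_\S\|_1$ tiles of $\T^l$, the fibres of this map have size at most $\|M_\S\|_1$, yielding $\N(\T^l,U_l)\le\|M_\S\|_1\,\L(\T^{l+1},\partial U)$.

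For \eqref{lem:lambda_m} I would compare two diameter bounds for $\partial U$. On one side, $U$ contains a tile $T^\star\in\T^{m-1}$ (since $G_{m-1}\neq\emptyset$), which in turn contains a ball of radius $r_{\T^{m-1}}=\lambda^{m-1}r_\T$; as $\partial U$ and $U$ have the same diameter, $\diam{\partial U}\geq 2\lambda^{m-1}r_\T$. On the other side, using the arc-connectedness of $\partial U$ together with Lemma \ref{lem:diam_jordan_arc} applied at level $l$ (where $R_{\T^l}=\lambda^l R_\T$), one gets $\diam{\partial U}\leq 2\lambda^l R_\T\,\L(\T^l,\partial U)$. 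Combining the two inequalities and cancelling the factor $2$ gives $\lambda^{m-1-l}\le (R_\T/r_\T)\,\L(\T^l,\partial U)$, which is \eqref{lem:lambda_m}. I expect the main obstacle to be the careful bookkeeping behind $(i)$--$(iii)$ --- in particular verifying that $U_l$ is an honest union of whole $\T^l$-tiles whose interiors are disjoint across levels --- together with the passage, in the last step, from the simple-curve statement of Lemma \ref{lem:diam_jordan_arc} to the connected boundary $\partial U$; the latter is legitimate because $\partial U$, being a union of cube-facets, is arc-connected, so any two of its points are joined by a simple arc lying in $\partial U$.
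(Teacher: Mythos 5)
Your proof is correct and is essentially the hierarchical ``top-down peeling'' construction that the paper delegates to \cite{ACG} (the paper's own proof of Proposition \ref{prop.hier} is a one-line deferral to that source): you recover the decomposition by extracting, level by level, the maximal supertiles contained in $U$, you get \eqref{eq:borde} from the parent map sending each tile of $U_l$ to a level-$(l+1)$ tile that must meet $\partial U$ with fibres of size at most $\|M_\S\|_1$, and you get \eqref{lem:lambda_m} by comparing the two diameter bounds for $\partial U$, correctly reducing the connected polyhedral set $\partial U$ to a simple arc before invoking Lemma \ref{lem:diam_jordan_arc}. The only point worth flagging is that whether the number of $\T^l$-subtiles of a $\T^{l+1}$-tile is a row sum or a column sum of $M_\S$ depends on the indexing convention, which the paper itself states inconsistently in Sections \ref{sec:defs} and \ref{Nous}; your bound by the maximum column sum matches the convention used in the proof of Theorem \ref{lem:key1}.
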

    
    \begin{proof} Despite the fact that in \cite{ACG}, the proof of Proposition \ref{prop.hier} is given in the particular case when $U$ is a bounded connected domain of the plane, it works here exactly along the same lines.     \end{proof}

\noindent $\bullet$ The last argument is an algebraic one. It consists into applying Perron-Frobenius theory to the substitution matrix $M_\S$. Recall that 
we are are assuming that $M$ is primitive. Thus, Perron-Frobenius theorem asserts that 
$M_\S$ has a largest real eigenvalue $\mu  > 0$, the \emph{Perron eigenvalue},
which is simple and larger than one. Recall that $r(M)$  is the modulus of the second largest eigenvalue of $M$,  that is, 
\[r(M) = \max \{|\eta|\mid \eta \neq \mu\text{ is an eigenvalue of } M \}.\]
Now, if $v$ is a right-eigenvector associated with the eigenvalue $\mu$ with positive coefficients and we let $e$ be the unit vector $(\frac{1}{\sqrt{n}}, \dots, \frac{1}{\sqrt{n}})$, then we have the following well-known consequence of Perron-Frobenius Theorem, 
see for instance \cite{HornJohnson}. 
\begin{prop}\label{PF}
For all $\rho > r(M)$,   there exist $K = K(\rho)>0$   and $\alpha>0$ such
that
\begin{equation}
\label{eq.pf1}
\Vert M_\S^le - \alpha\mu^l v \Vert_1\leq K\rho^l
\end{equation}
for all integer $l>0$.  
\end{prop}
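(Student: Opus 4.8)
The plan is to read off the estimate from the spectral decomposition of $M=M_\S$ supplied by Perron--Frobenius theory. Since $M$ is primitive, its Perron eigenvalue $\mu$ is simple and strictly dominates every other eigenvalue in modulus, so $\mu$ is a semisimple (one-dimensional, non-defective) eigenvalue and we may split $\RR^n$ into the $\mu$-eigenline spanned by $v$ and a complementary $M$-invariant subspace on which $M$ has spectral radius exactly $r(M)$. Concretely, let $w$ be a left eigenvector of $M$ for $\mu$, normalized by $w^T v = 1$; by Perron--Frobenius both $v$ and $w$ have strictly positive entries. Set $P := v\,w^T$. Then $P$ is a rank-one projection ($P^2=P$) commuting with $M$, with $MP = PM = \mu P$, and $N := M(I-P)$ is supported on $\ker P$ where its spectral radius is $r(M)$.

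First I would record that, because $P$ and $I-P$ are complementary $M$-invariant projections whose cross terms vanish ($(MP)(M(I-P)) = \mu^2 P(I-P)=0$), the powers of $M$ split cleanly:
\[
M^l = \mu^l P + N^l \qquad \text{for all } l\ge 0.
\]
Applying this to the vector $e$ and using $P e = (w^T e)\,v$, I set $\alpha := w^T e = \langle w, e\rangle$, so that $Pe = \alpha v$ and consequently
\[
M_\S^l e - \alpha \mu^l v = N^l e.
\]
Since every entry of $w$ is positive and every entry of $e=(\tfrac{1}{\sqrt n},\dots,\tfrac{1}{\sqrt n})$ is positive, we get $\alpha>0$, which secures the positivity assertion in the statement.

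It then remains to bound $N^l e$. The operator $N$ has spectral radius $r(M)$, so by Gelfand's formula (equivalently, by placing the restriction of $M$ to $\ker P$ in Jordan form) for any fixed $\rho>r(M)$ there is a constant $C=C(\rho)$ with $\Vert N^l\Vert \le C\rho^l$ for all $l$. Passing to the $\ell^1$-norm on the finite-dimensional space $\RR^n$ via equivalence of norms yields
\[
\Vert M_\S^l e - \alpha\mu^l v\Vert_1 = \Vert N^l e\Vert_1 \le K\rho^l,
\]
with $K=K(\rho)>0$, as claimed.

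The only genuinely delicate point is this last passage from the spectral radius $r(M)$ to a uniform geometric bound $C\rho^l$: the subdominant Jordan blocks may contribute polynomial factors $l^{s}$ to $\Vert N^l\Vert$, and these are precisely what force us to permit any $\rho$ \emph{strictly} larger than $r(M)$ rather than $\rho = r(M)$ itself, since $l^{s}\,r(M)^l = o(\rho^l)$ whenever $r(M)<\rho$. Everything else is routine finite-dimensional linear algebra, so I expect the write-up to be short.
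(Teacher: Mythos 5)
Your proof is correct and is essentially the argument the paper intends: the paper gives no proof of Proposition \ref{PF}, calling it a ``well-known consequence of Perron--Frobenius Theorem'' and citing Horn--Johnson, and the standard proof there is exactly your spectral splitting $M^l=\mu^l P+N^l$ with $P=vw^{T}$, $\alpha=w^{T}e>0$, and a Gelfand-formula bound $\Vert N^l\Vert\le C\rho^l$ for $\rho>r(M)$. The only cosmetic slip is that the identity $M^l=\mu^l P+N^l$ holds for $l\ge 1$ rather than $l=0$ (at $l=0$ it would read $I=P+I$), which is harmless since the statement only concerns $l>0$.
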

Recall that $\mathcal{Q}$ is the set of types of prototiles. Given $q_i,q_j \in \mathcal{Q}$, $m_{i,j}$ counts the number 
of tiles of type $q_i$ in $S_{p}$ where $p$ is any prototile in $q_j$. The relation with Perron-Frobenius theory comes from the fact
that if $m_{i,j}^{(l)}$ is the corresponding element of $M_\S^l$, then $m_{i,j}^{(l)}$ counts the number of tiles of type $q_i$ in a prototile of type $q_j$ after applying the substitution to it $l$ times. 

\begin{proof}[Proof of Theorem \ref{lem:key1}]

Let 
$\{U_0,\ldots,U_{m-1}\}$ be the
hierarchical decomposition of $U$ given by Proposition \ref{prop.hier}. 
Since $\N(\T, U)\ge 1$, we have that $m\ge 1$.
Recall that $\mathcal{Q}=\{q_1,\ldots,q_n\}$ denote the set of
$\S$-equivalent classes of tiles in $\P$. 
The $\S$-equivalent relation can be extended to the set of tiles 
$\E$-equivalent to some tile in $\lambda^l\P$, for every integer $l\ge 0$. 
We have  that a tile $\lambda^lt$ is $\S$-equivalent to $\lambda^lp$ for some
$p\in \P$ if and only if $t$ is  $\S$-equivalent to $p$. 
Thus, the number of $\S$-equivalence classes of the tiles in $\P$ and
$\lambda^l\P$ is the same.
Denote by
$\{\lambda^lq_1,\ldots,\lambda^lq_n\}$ the set of $\S$-equivalence classes of
tiles in $\lambda^l\P$ and by
$\{\lambda^lp_1,\ldots,\lambda^lp_n\}$  a  set of representatives of these
classes.

Observe  that 
\begin{equation}
\label{eq:syn_1}
\N(\T, U) = 
\sum_{l=0}^{m-1}\sum_{t^l\in U_l} \N( \T, t^l)
= 
\sum_{l=0}^{m-1} \sum_{i=1}^n\sum_{\stackrel{t^l\in U_l}{t^l\in
\lambda^l q_i}} 
\N(\T, t^l, \lambda^l q_i),
\end{equation}
where  $\N(\T, t^l, \lambda^l q_i)$ denotes the number of tiles  of $\T$
included in the tile $t^l\in \lambda^l q_i$. Using the  matrix $M_\S$, we deduce
\[\N(\T, t^l, \lambda^l q_i) = \sum_{j=1}^n m_{i,j}^l,\] 
where  $m_{i,j}^l$ denotes the $(i,j)$-element of the matrix $M_\S^l$.
Moreover, 
it is straightforward to check, using the fact that any tile in  $\lambda q_i$
is tiled by $O(\RR^d)$-copies of tiles in $ q_j$, for $j=1,\ldots,n$, and that
$M_\S$ counts these tiles, that 
\[w = (\mu_d(p_1), \dots, \mu_d(p_n))\]
is a right-eigenvector for $M_\S$ with eigenvalue $\lambda^d$, 
which is the Perron-Frobenius eigenvalue of $M_\S$. Thus, as a direct corollary
of Proposition \ref{PF}, we get:
\begin{cor}
\label{cor:PF}
For all $\rho
> r(M_\S)$, there are $K_0$ and $\alpha>0$ such that for all $\, l\in \{0,
\dots, m\}$, all $
i\in \{1, \dots, n\}$, we have
\[\vert\N(\T, t^l, \lambda^l p_i)\, -\,  \alpha\mu_d (\lambda^l p_i)\vert \,
\leq \, K_0 \rho^l.\]
\end{cor}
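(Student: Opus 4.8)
The plan is to read Corollary \ref{cor:PF} as a coordinatewise reformulation of Proposition \ref{PF}, the only real work being to match the vectors and constants there with the geometric data of the tiling. First I would record the combinatorial identity already extracted in the text: since a tile $t^l$ of type $\lambda^l q_i$ is subdivided by $\T$ into $\sum_{j=1}^n m_{i,j}^l$ tiles (where $p_i$ denotes the chosen representative of the class $q_i$), we have
\[
\N(\T, t^l, \lambda^l p_i) = \sum_{j=1}^n m_{i,j}^l = (M_\S^l \mathbf 1)_i,
\]
where $\mathbf 1$ is the all-ones column vector. Because the unit vector $e$ of Proposition \ref{PF} is exactly $\tfrac{1}{\sqrt n}\mathbf 1$, this reads $\N(\T, t^l, \lambda^l p_i) = \sqrt n\,(M_\S^l e)_i$, so the quantity we must control is nothing but a rescaled coordinate of $M_\S^l e$.

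The next step is to apply Proposition \ref{PF} with the \emph{specific} choice of Perron right-eigenvector $v = w := (\mu_d(p_1),\dots,\mu_d(p_n))$; this is legitimate since $w$ has positive entries and, as noted just before the corollary, satisfies $M_\S w = \lambda^d w = \mu w$. For any fixed $\rho > r(M_\S)$ this yields constants $K = K(\rho)$ and $\alpha > 0$ with $\norm{M_\S^l e - \alpha \mu^l w}_1 \le K\rho^l$ for all $l > 0$. Passing to the $i$-th coordinate (bounded by the $\ell^1$-norm) and multiplying through by $\sqrt n$ gives
\[
\abs{\N(\T, t^l, \lambda^l p_i) - \sqrt n\,\alpha\,\mu^l w_i} \le \sqrt n\,K\rho^l .
\]

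It then remains to recognize the middle term as a multiple of $\mu_d(\lambda^l p_i)$. Using $\mu = \lambda^d$ and $w_i = \mu_d(p_i)$ we have $\mu^l w_i = \lambda^{ld}\mu_d(p_i) = \mu_d(\lambda^l p_i)$, so the displayed inequality is exactly the assertion of the corollary with $\alpha := \sqrt n\,\alpha$ and $K_0 := \sqrt n\,K$, for every $l \ge 1$. The case $l = 0$ is handled separately: there $\N(\T, t^0, p_i) = 1$ and $\mu_d(p_i)$ ranges over finitely many prototiles, so $\abs{\N(\T, t^0, p_i) - \alpha\,\mu_d(p_i)}$ is uniformly bounded and can be absorbed by enlarging $K_0$. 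There is no genuine obstacle here; the single point deserving care is the choice of eigenvector — taking $v = w$ rather than an arbitrary positive eigenvector is what forces the constant $\alpha$ to be the geometric density and lets the bound come out cleanly in terms of $\mu_d(\lambda^l p_i)$, with no spurious proportionality factor. The only bookkeeping is tracking the $\sqrt n$ that converts between $e$ and $\mathbf 1$.
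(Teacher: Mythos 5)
Your proof is correct and follows essentially the same route as the paper: the paper also obtains the corollary directly from Proposition \ref{PF} by identifying $\N(\T,t^l,\lambda^l p_i)$ with the row sums of $M_\S^l$ and choosing the Perron eigenvector to be $w=(\mu_d(p_1),\dots,\mu_d(p_n))$, so that $\mu^l w_i=\mu_d(\lambda^l p_i)$. Your write-up merely makes explicit the bookkeeping the paper leaves implicit (the $\sqrt n$ factor relating $\mathbf 1$ to $e$, extracting a coordinate from the $\ell^1$ bound, and the $l=0$ case, which is needed since Proposition \ref{PF} is stated only for $l>0$).
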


On the other hand
\begin{equation}
\label{eq:syn_2}
\mu_d (U_\T)\, =\, \sum_{l = 0}^{ m-1}\sum_{\stackrel{ t^l\in \T^l,}{t^l\subset
U_l}} \mu_d(t^l) \, =\,\sum_{l = 0}^{ m-1} \sum_{i=1}^n\sum_{\stackrel{ t^l\sim
\lambda^l p_i\in \T^l,}{t^l\subset U_l}} \mu_d(\lambda^lp_i).
\end{equation}
Multiplying \eqref{eq:syn_2} by $\alpha$ and then subtracting  to \eqref{eq:syn_1} we get 
\begin{equation}
\label{eq:syn_2_1} \N(\T, U)  - \alpha\mu_d (U_\T)\, =    \,
\sum_{l = 0}^{ m-1} \sum_{i=1}^n\sum_{\stackrel{ t^l\sim \lambda^l p_i\in
\T^l,}{t^l\subset U_l}} (\N(\T, t^l,\lambda^l  q_i)- \alpha \mu_d(\lambda^l
p_i)).
\end{equation}
Applying Corollary \ref{cor:PF} to \eqref{eq:syn_2_1} yields
\begin{equation}
\label{eq:syn_3}
\vert \N(\T, U)  - \alpha\mu_d (U_\T) \vert \leq 
\sum_{l = 0}^{ m-1} \sum_{i=1}^n \sum_{\stackrel{ t^l\sim \lambda^l p_i\in
\T^l,}{t^l\subset U_l}} K_0 \rho^l
=  \sum_{l = 0}^{m-1} \N(\T^l, U_l)K_0\rho^l.
\end{equation}
After combining \eqref{eq:borde}  and
\eqref{eq:syn_3}, we get
\begin{equation}
\label{eq:syn_4}\vert \N(\T, U)  - \alpha\mu_d (U_\T) \vert
\leq 
K_0 \|M\|_1 \sum_{l=0}^{m-1}\L(\T^{l+1},\partial U) \rho^l.
\end{equation}

We want to apply Lemma \ref{lem:bord} to give an upper bound of
$\N(\Tt^{l+1},\partial U)$ in terms of $\N(\Tt,\partial U)$ for all
$l\in\{0,\ldots,m-1\}$.
First, observe that  the sequence $\N(\T^l, \partial U), l\in \NN,$ is
decreasing. Define $l_0$ to be the largest $l\in\{1,\ldots,m\}$ such that 
$\N(\T^l, \partial U) > K_{\T}$. 
Now, we split the sum in \eqref{eq:syn_4} into two parts
\begin{equation}
\label{eq:syn_split}
\sum_{l=0}^{l_0-1}\L(\Tt^{l+1},\partial U) \rho^l + 
\sum_{l=l_0}^{m-1}\L(\Tt^{l+1}, \partial U) \rho^l.
\end{equation}
We apply Lemma \ref{lem:bord} to each term in the first sum, 
which gives
\[
\L(\Tt^{l+1}, \partial U)\leq
(2K_{\Tt^{l+1}}+1)\frac{R_\Tt}{R_{\Tt^{l+1}}}\L(\Tt, \partial U)
\]
for all $l\in\{0,\ldots,l_0-1\}$. Since $K_{\T^l} = K_{\T}$ and 
$R_\Tt/R_{\Tt^l}=\lambda^{-l}$ for every $l\geq 0$, it follows that
\begin{equation}
\label{eq:syn_first_sum}
\sum_{l=0}^{l_0-1}\L(\Tt^{l+1}, \partial U) \rho^l \leq  
(2K_\Tt+1) \lambda^{-1} \L(\Tt, \partial U)
\sum_{l=0}^{l_0-1}\left(\frac{\rho}{\lambda}\right)^l. 
\end{equation}

Now we estimate the second sum in \eqref{eq:syn_split}. Since $\rho < \lambda$
and 
the sequence $\N(\Tt^l, \partial U), l\in \NN,$
is decreasing, we have
\begin{equation}
\label{eqNl0}
\sum_{l=l_0}^{m-1}\L(\Tt^{l+1},\partial U) \rho^l
\leq\L(\Tt^{l_0}, \partial U)\sum_{l=l_0}^{m-1} \lambda^l \leq 
K_\T
\left(\frac{\lambda^m-\lambda^{l_0}}{\lambda-1}\right).
\end{equation}

From  \eqref{lem:lambda_m}
we get   $\lambda^{m}\leq \lambda R_\Tt/r_\Tt \L(\T, \partial U)$. 
Setting $N_\Tt:= (K_\Tt R_\Tt/r_\T)(\lambda/(\lambda-1))$ and replacing into 
\eqref{eqNl0} we get 
\begin{equation}
\label{eq:syn_6}
\sum_{l=l_0}^{m-1} \L(\Tt^{l+1},\partial U) \rho^l\leq  N_\Tt\L(\Tt, \partial
U).
\end{equation} 
Combining 
\eqref{eq:syn_first_sum} and \eqref{eq:syn_second_sum}, we get

\begin{equation}
\label{eq:syn_second_sum}
\sum_{l=0}^{m-1} \L(\Tt^{l+1},\partial U) \rho^l\leq \left( (2K_\Tt+1)
\sum_{l=0}^{l_0-1}\left(\frac{\rho}{\lambda}\right)^l + N_\Tt\right) \L(\T,
\partial U).
\end{equation}
Using \ref{eq:syn_4} we conclude that 

\begin{equation}
\label{eq:syn_4}
\vert \N(\T, U)  - \alpha\mu_d (U_\T) \vert
\leq 
K_0 \|M\|_1(2K_\Tt+1)\left( (2K_\Tt+1) 
\sum_{l=0}^{l_0-1}\left(\frac{\rho}{\lambda}\right)^l + N_\Tt\right) 
\L(\T,\partial U).
\end{equation}
Since we can choose $r(M)<\rho<\lambda$, for all $\delta$ that fits with $\T$,
there exists a constant $\hat K$ which depends only on $\T$ such that 
\begin{equation}
\label{eq:syn_5}
\vert \N(\T, U)  - \alpha\mu_d (U_\T) \vert
\leq 
\hat K \L(\Tt, \partial U),
\end{equation}

On the other hand, we have 
\begin{equation}
\label{eq:syn_6}
\vert \mu_d(U)-  \mu_d (U_\T) \vert
\leq 
(2R_\T)^d \L(\Tt, \partial U),
\end{equation}
for all $U\in \U_\delta$.
Combining these last two equations, we get that  for all $\delta$ that fits with $\T$, there exists a constant $K$ which depends only on $\T$ such that:
\begin{equation}
\label{eq:syn_5}
\vert \N(\T, U)  - \alpha\mu_d (U) \vert
\leq 
K \L(\Tt, \partial U).
\end{equation}

\end{proof}

\noindent \textbf{Acknowledgments.} 
After a first version of this work was sent to  Arxiv, we learned from M. Baake that 
D. Frettl\"oh and A. Garger \cite{Frettloh} obtained a version of Theorem \ref{thm:main} when $d=2$.

\noindent This work is part of the project {\it CrystalDyn} supported
by the "Agence Nationale de la Recherche" (ANR-06-BLAN- 0070-01). 
D. Coronel and J. Aliste-Prieto respectively ackwnoledge support from Fondecyt post-doctoral Grants 3100092 and  3100097. 

\noindent The authors are very grateful to Dong Ye who, very generously, introduced them to  
his work with Tristan Rivi\`ere. 



\begin{thebibliography}{CG}

\bibitem[AP98]{AP}
J.~Anderson and I.~Putnam.
\newblock {Topological invariants for substitution tilings and their associated
  $C\sp *$-algebras}.
\newblock {\em {Ergodic Theory Dynam. Systems}}, {18}({3}):509--537, {1998}.


\bibitem[ACG11]{ACG} J. Aliste-Prieto, D. Coronel, J.-M. Gambaudo, 
\textit{Rapid convergence to frequency for substitution tilings of the plane} 
to appear in Com.math.Phys.  

\bibitem[BK98]{BK} D. Burago, B. Kleiner, \textit{Separated nets in Euclidean
space
and Jacobians of bi-Lipschitz maps.}  Geom. Funct. Anal.  8  (1998),  no. 2,
273--282.

\bibitem[BK02]{BK1} D. Burago, B. Kleiner, \textit{Rectifying
separated nets.}  Geom. Funct. Anal.  12 (2002),  no. 1, 80--92. 

\bibitem[Bru81]{dB} N. G. de Bruijn,  \textit{Algebraic theory of Penrose's
nonperiodic
tilings of the plane. I, II.}  Nederl. Akad. Wetensch. Indag. Math.  43  (1981),
no. 1, 39--52, 53--66.

\bibitem[FG]{Frettloh} D. Frettl\"oh, A. Garber, \textit{Private Communication}.
\bibitem[Gro93]{Gromov} M. Gromov,
 \textit{Asymptotic invariants of infinite groups.}  
Geometric group theory, Vol. 2 (Sussex, 1991),  1--295,
London Math. Soc. Lecture Note Ser., 182, Cambridge Univ. Press, Cambridge, 1993. 



\bibitem[GS89]{GS} B. Gr\"{u}nbaum, G.C. Shephard, \textit{Tilings and patterns.}
W.H. Freeman and Company, New York, 1989. 

\bibitem[HJ90]{HornJohnson}
R.~A. Horn, Ch.~R. Johnson.
\newblock {\em {Matrix analysis}}.
\newblock {Cambridge University Press}, {Cambridge}, {1990}.
\newblock {Corrected reprint of the 1985 original}.

\bibitem[LP03]{LP} J.C. Lagarias, P. A. B. Pleasants,
 \textit{Repetitive Delone sets and quasicrystals.}  Ergodic Theory Dynam.
Systems  23  (2003),  no. 3, 831--867.

\bibitem[Lac92]{Lacz} M. Laczkovich, \textit{ Uniformly spread discrete sets in $\RR^d$.}  
J. London Math. Soc. (2)  46  (1992),  no. 1, 39--57. 

\bibitem[McM98]{M} C.T. McMullen, \textit{Lipschitz maps and nets in Euclidean
space.} Geom. Funct. Anal. 8 (1998), 304--314. 

\bibitem[PS01]{PS} N. Priebe, B. Solomyak, \textit{Characterization of planar
pseudo-self-similar tilings.} Discrete Comput. Geom.  26  (2001),  no. 3,
289--306.

\bibitem[RY96]{RY} T. Rivi\`ere, D. Ye, {\textit Resolutions of the prescribed volume form equation}, NoDEA 3 (1996) 323--369. 

\bibitem[SBGC84]{shechtman:1984}
D.~Shechtman, I.~Blech, D.~Gratias, J.~W. Cahn.
\newblock {Metallic phase with long range orientational order and no
  translational symmetry}.
\newblock {\em {Phys. Review Letters}}, {53}({20}):1951--1954, {1984}.

\bibitem[Sol11]{Solomon} Y. Solomon,  
\textit{Substitution Tilings and Separated Nets with Similarities to the Integer Lattice }, Israel J. Math. 181 (2011), no. 1, 445--460.

\bibitem[Sol98]{S} B. Solomyak, 
\textit{Nonperiodicity implies unique composition for self-similar
translationally finite tilings.}  Discrete Comput. Geom.  20  (1998),  no. 2,
265--279.

\bibitem[Sol05]{S1} B. Solomyak, \textit{Pseudo-self-affine tilings in $\RR^
d$.} Zap. Nauchn. Sem. S.-Peterburg. Otdel. Mat. Inst. Steklov. (POMI)  326 
(2005), Teor. Predst. Din. Sist. Komb. i Algoritm. Metody. 13, 198--213,
282--283; translation in  J. Math. Sci. (N. Y.)  140  (2007),  no. 3, 452--460 

\bibitem[Tol96]{Toledo} D. Toledo, 
\textit{Geometric group theory, 2: Asymptotic invariants of finite groups by M. Gromov.}  
Bull. Amer. Math. Soc. 33 (1996),  
395--398.


\end{thebibliography}
\end{document}